\makeatletter\@addtoreset{equation}{section}
\newtheorem{theorem}{Theorem}[section]
\newtheorem{corollary}[theorem]{Corollary}
\newtheorem{lemma}[theorem]{Lemma}
\newtheorem{proposition}[theorem]{Proposition}
\theoremstyle{remark}
\newtheorem{definition}[theorem]{Definition}
\newtheorem{remark}[theorem]{Remark}
\newtheorem{example}[theorem]{Example}
\newcommand{\C}{\mathbb{C}}
\newcommand{\R}{\mathbb{R}}
\newcommand{\Zp}{\mathbb{Z}^+}
\newcommand{\N}{\mathbb{N}}
\renewcommand{\H}{\mathcal{H}}
\newcommand{\K}{\mathcal{K}}
\renewcommand{\l}{\mathcal{L}}
\newcommand{\M}{\mathcal{M}}
\renewcommand{\P}[1]{\mathcal{P}_{#1}(\mathbb{C})}
\newcommand{\lh}{{\mathcal B}(\mathcal{H})}
\newcommand{\lk}{{\mathcal B}(\mathcal{K})}
\newcommand{\csh}{{\rm CSO}}
\newcommand{\ssh}{{\rm SSO}}
\newcommand{\spn}{{\rm span}}
\newcommand{\ran}{{\rm ran}  }
\renewcommand{\ker}{{\rm ker}}
\title
{Interpolation theorems for conjugations and applications}
\author[Z. Amara]{Zouheir Amara}
\address{Department of Mathematics, Labo LIABM, Faculty of Sciences, Mohammed First University, 60000 Oujda, Morocco}
\email{z.amara@ump.ac.ma}
\subjclass[2020]{47B15, 47A15.}
\keywords{Conjugations, interpolation, complex-symmetric operators, skew-symmetric operators}
\begin{document}

\maketitle

\begin{abstract}
Let $\H$ be a separable complex Hilbert space. A conjugate-linear map $C:\H\to \H$ is called a conjugation if it is an involutive isometry. In this paper, we focus on the following interpolation problems: Let $\{x_i\}_{i\in I}$ and $\{y_i\}_{i\in I}$ be orthonormal sets of vectors in $\H$, and let $\{N_k\}_{k\in K}$ be a set of mutually commuting normal operators. We seek to determine under which conditions there exists a conjugation $C$ on $\H$ such that
\begin{enumerate}[\rm (a)]
\item  $Cx_i=y_i$ and $CN_kC=N_k^*$ for all $i\in I$ and $k\in K$; or
\item  $Cx_i=y_i$ and $CN_kC=-N_k^*$ for all $i\in I$ and $k\in K$.
\end{enumerate}
We provide complete answers to problems (a) and (b) using the spectral projections of normal operators. Our results are then applied to the study of complex symmetric and skew symmetric operators, as well as to the characterization of hyperinvariant subspaces of normal operators through conjugations. %Additionally, we give necessary and sufficient conditions for the existence of solutions to certain equations in $L^{\infty}(\mu)$.
\end{abstract}

\section{Introduction}

Throughout this paper, $\H$ and $\K$ are separable complex Hilbert spaces. We use the notation $\langle ., .\rangle$ for the inner product of both $\H$ and $\K$. The algebra of all bounded linear operators on $\H$ is denoted by $\lh$, and the identity operator on $\H$ is denoted by $I_{\H}$. The range and the null space of an operator $T$ are denoted by $\ran(T)$ and $\ker(T)$, respectively. All direct sums in this paper are orthogonal. The sets of non-negative integers and positive integers are denoted, as customary, by $\N$ and $\Zp$, respectively. Throughout the paper, $\mathcal{N}=\{N_k\}_{k\in K}$ and $\mathcal{M}=\{M_k\}_{k\in K}$ are two sets of normal operators on $\H$ such that each two operators in their union are commuting, and $X=\{x_i\}_{i\in I}$ and $Y=\{y_i\}_{i\in I}$ will always denote two orthonormal sets of non-zero vectors in $\H$.
\begin{definition}\label{def.conj}
A {\it conjugation} $C$ on $\H$ is a conjugate-linear map $C:\H\to \H$ that satisfies
\begin{enumerate}[\rm (i)]
\item $C^2=I_{\H}$;
\item $\langle Ch,Ck\rangle=\langle k,h\rangle$ for all $h,k\in \H$.
\end{enumerate}
\end{definition}
The simplest example of a conjugation on $\H$ is the map given by
$
C\left(\sum_i \lambda_i e_i\right)=\overline{\lambda_i}e_i
$
where $\{e_i\}_i$ is an orthonormal basis of $\H$. In fact, according to \cite[Lemma 1]{Garcia.Putinar}, any conjugation can be represented in this manner with respect to some orthonormal basis.

Conjugations have been the subject of intensive study in recent years  \cite{Camara.Klis.Lanucha.Ptak, Camara.Klis.Lanucha.Ptak2, Dymek.Planeta.Ptak, Dymek.Planeta.Ptak2, Mashreghi.Ptak.RossI, Mashreghi.Ptak.RossII}, with origins in physics. The composition $\mathcal{P}\mathcal{T}$, of the {\it parity} operator $[\mathcal{P}f](x)=f(-x)$ and the {\it time-reversal} operator $[\mathcal{T}f](x)=\overline{f(x)}$ defines a conjugation on $L^2(\R^n)$ which holds significant importance in the theory of $\mathcal{P}\mathcal{T}$-symmetric quantum theory (see, for instance, \cite{Bender,Bender.Boettcher}). For more historical comments about conjugations, we refer the interested reader to \cite{Camara.Klis.Lanucha.Ptak, Wang.Xie.Yan.Zhu} and references therein. Recent investigations into conjugations have largely stemmed from their connections to complex-symmetric and skew-symmetric operators.

An operator $T\in\lh$ is called {\it complex-symmetric} (resp. {\it skew-symmetric}) if there exists a conjugation $C$ on $\H$ such that $CTC = T^*$ (resp. $CTC = -T^*$); in this case, we say more precisely that $T$ is {\it $C$-symmetric} (resp. {\it $C$-skew symmetric}). An equivalent definition is that $T$ has a symmetric (resp. skew-symmetric) matrix $M$ with respect to some orthonormal basis; i.e., $M^{\rm tr}=M$ (resp. $M^{\rm tr}=-M$) where ${\rm tr}$ stands for the transpose (\cite{Garcia.Putinar2, Li.Zhu}). Denote by $\csh$ and $\ssh$ the classes of complex symmetric and skew symmetric operators on $\H$, respectively. These classes of linear operators have received significant attention in the last two decades. Applications of complex symmetric operators and skew-symmetric matrices are found in various areas (see, for instance, \cite{Garcia.Putinar, Garcia.Putinar2, Garcia.Prodan.Putinar, Prodan.Garcia.Putinar} and \cite{Cao.Hu, Mehl, Pinero.Singh}).

\medskip

Given two vectors $x,y\in \H$, we denote by $x\otimes y$ the operator defined by $(x\otimes y)h=\langle h,y\rangle x$ for every $h\in \H$. It is easy to see that if $C$ is a conjugation, then
\begin{equation}\label{CxoyC}
C(x\otimes y)C=(Cx)\otimes(Cy)\quad\mbox{for all }x,y\in \H.
\end{equation}

In \cite{Liu.Shi.Wang.Zhu, Liu.Xie.Zhu, Wang.Xie.Yan.Zhu, Zhu.Li}, the authors investigated various interpolation problems for conjugations. For instance, in \cite{Liu.Shi.Wang.Zhu}, Liu et al. established that for two orthogonal projections $P$ and $Q$ in $\lh$, a conjugation $C$ on $\H$ exists with $CPC=Q$ if and only if $\ran(P)$ and $\ran(Q)$ are in symmetric position; i.e.,
$$
\dim\big(\ran(P)^{\perp}\cap \ran(Q)\big)=\dim\big(\ran(P)\cap \ran(Q)^{\perp}\big).
$$
%In \cite{Wang.Xie.Yan.Zhu}, Wang et al. proved that for two commuting self-adjoint operators $P,Q\in\lh$, there exists a conjugation $C$ on $H$ satisfying
%$
%CPC=Q
%$
%if and only if there exists an isometric isomorphism $U$ between Hilbert spaces such that
%$$
%UPU^{-1}=B_1\oplus B_2\oplus A
%\quad\mbox{and}\quad
%UQU^{-1}=B_2\oplus B_1\oplus A
%$$
%where $B_1$, $B_2$ and $A$ are self-adjoint operators such that $B_1$ and $B_2$ have the same underlying Hilbert space and commute. These results were applied to characterize  conjugate-normal (see the definition below) weighted shifts and partial isometries.

%The most relevant problem to our results is the one addressed in  \cite[Theorem 2.1]{Zhu.Li}. Zhu et al. showed that for two subsets of orthonormal vectors $\{x_i\}_{i\in I}$ and $\{y_i\}_{i\in I}$  in $H$, there exists a conjugation $C$ on $H$ satisfying $Cx_i=y_i$ for every $i\in I$ if and only if
%\begin{equation}\label{Zhu.interpo}
%\langle x_i,y_j\rangle=\langle x_j,y_i\rangle\quad\mbox{ for all $i,j\in I$}.
%\end{equation}

The main motivation for our results in Section \ref{interpolation.section.Csym} stems from the study of complex symmetric operators. It should be mentioned that the class $\csh$ is not stable under the addition. Indeed, it is well-known that every normal operator $N\in\lh$ is complex symmetric. However, even its sum with the simplest complex symmetric operator may fail to be in $\csh$ (see \cite[Theorem 2.4]{Amara.Oudghiri}). The main results in Section \ref{interpolation.section.Csym} can be used to study complex symmetry of $N+T$ for a dense class of operators $T$ in $\lh$. To elaborate, Zhu and Li proved in \cite{Zhu.Li} that operators $T\in\lh$ of the form
\begin{equation}\label{Dense.class}
T=\sum_{j\in J} r_j e_j\otimes f_j,
\end{equation}
where $\{r_j\}_{j\in J}$ are distinct positive scalars and $\{e_j\}_{j\in J}$ and $\{f_j\}_{j\in J}$ are orthonormal subsets of $\H$, form a dense class in $\lh$. Moreover, they showed that such an operator is complex symmetric if and only if there exist unimodular scalars $\lambda_j\in\C$, $j\in J$, such that
\begin{equation}\label{geo.cri}
\lambda_i\langle e_i,f_j\rangle=\lambda_j\langle e_j,f_i\rangle\quad\mbox{for all }i,j\in J.
\end{equation}
Corollary \ref{cmplx.sym.N+T} provides geometric conditions for the existence of a conjugation $C$ such that $N_k+\lambda T$ is $C$-symmetric for all $k\in K$ and $\lambda\in\C$. When $N_k=0$ for every $k\in K$, we recover the result in \eqref{geo.cri} with ease. Furthermore, our results extend a recent finding by Mashreghi et al. \cite{Mashreghi.Ptak.RossI} on hyperinvariant subspaces of unitary operators (see Proposition \ref{hyperinvariant}).

In Section \ref{interpolation.section.Cskew}, we establish necessary and sufficient conditions for the existence of a conjugation $C$ such that $CN_kC=-N_k^*$ and $Cx_i=y_i$ for all $k\in K$ and $i\in I$. These results are used to characterize skew symmetry of operators of the form
\begin{equation}\label{Dense.class.skew}
N+\sum_{j\in J} r_j \left(e_j\otimes e_j - f_j\otimes f_j\right),
\end{equation}
where $N\in\lh$ is normal, $\{r_j\}_{j\in J}$ are distinct positive scalars and $\{e_j,f_j\}_{j\in J}$ is an orthonormal subset. Notably, skew symmetric operators of the form \eqref{Dense.class.skew} are dense in $\ssh$ (see Section \ref{interpolation.section.Cskew}).

To prove the main results of Section \ref{interpolation.section.Cskew}, we will uncover certain facts about skew symmetric normal operators. We will show that $\{N_k\}_{k\in K}$ are simultaneously skew-symmetric (i.e., there exists a conjugation $C$ such that $CN_kC=-N_k^*$ for every $k\in K$) if and only if there exists a unitary operator $U\in\lh$ such that $UN_kU^*=-N_k$ for every $k\in K$. Furthermore, for a normal operator $N\in\lh$, we provide a condition on the multiplicity of $N$ under which skew symmetry of $N\oplus T$ implies that one of $T$, where $T$ is an arbitrary operator.

%In the last section, we apply our main results to give necessary and sufficient conditions for the existence of solutions of certain equations in $L^{\infty}(\mu)$ with coefficients in $L^{2}(\mu)$.

\section{Interpolation theorems for conjugations : $C$-symmetric versions}\label{interpolation.section.Csym}
Given a subset $\mathcal{A}=\{ A_k \}_{k\in K}\subset \lh$ of mutually commuting normal operators and a subset of vectors $Z\subseteq\H$, we denote
$$
L_{\mathcal{A},Z} =\bigvee\big\{ \prod_{k\in K_0} A_k^{n_k} A_k^{m_k*}h : K_0\subseteq K\mbox{ is finite, }n_k,m_k\in\N \mbox{ and } h\in Z \big\}
$$
where $\vee$ denotes the closed linear span. By the Fuglede Theorem, we also have $A_kA_l^*=A_l^*A_k$ for any $k,l\in K$. Hence, one can see that $L_{\mathcal{A},Z}$ is a reducing subspace of each $A_k$.
%The restriction of $N_k$ to $L_{\mathcal{A},X}$ will be denoted by ${N_k}_{\mathcal{A},X}$.

\medskip

For a normal operator $N$, we denote by $E_N$ the associated spectral measure defined on Borel subsets of $\C$. The following theorem is central to proving our main results.

\subsection{Main results}

%For the sake of simplicity, we write $(\mathcal{N},X)\sim (\mathcal{M},Y)$ if
%
%$$
%\left\langle \left(\prod_{k\in K_0} E_{N_k}(\Delta_k)\right) x_i, \left(\prod_{l\in K_1} E_{M_l}(D_l)\right)x_j \right\rangle
%=
%\left\langle \left(\prod_{l\in K_1} E_{N_l}(D_l)\right) y_j, \left(\prod_{k\in K_0} E_{M_k}(\Delta_k)\right)y_i \right\rangle
%$$
%and
%$$
%\left\langle \left(\prod_{k\in K_0} E_{N_k}(\Delta_k)\right) x_i, \left(\prod_{l\in K_1} E_{M_l}(D_l)\right)y_j \right\rangle
%=
%\left\langle \left(\prod_{l\in K_1} E_{N_l}(D_l)\right) x_j, \left(\prod_{k\in K_0} E_{M_k}(\Delta_k)\right)y_i \right\rangle.
%$$

\begin{theorem}\label{interpo.Nk.Mk}
Assume that $\H=L_{\mathcal{N}\cup\mathcal{M},X\cup Y}$. The following statements are equivalent:
\begin{enumerate}[\rm (i)]

\item There exists a conjugation $C$ on $\H$ such that $CN_kC=M_k^*$ and $Cx_i=y_i$ for all $k\in K$ and $i\in I$.

\item For all finite subsets $K_0,K_1\subseteq K$, Borel subsets $\{\Delta_k\}_{k\in K_0}$ and $\{D_k\}_{k\in K_1}$ of $\C$, and $i,j\in I$, we have
$$
\left\langle \left(\prod_{k\in K_0} E_{N_k}(\Delta_k)\right) x_i, \left(\prod_{l\in K_1} E_{M_l}(D_l)\right)x_j \right\rangle
=
\left\langle \left(\prod_{l\in K_1} E_{N_l}(D_l)\right) y_j, \left(\prod_{k\in K_0} E_{M_k}(\Delta_k)\right)y_i \right\rangle
$$
and
$$
\left\langle \left(\prod_{k\in K_0} E_{N_k}(\Delta_k)\right) x_i, \left(\prod_{l\in K_1} E_{M_l}(D_l)\right)y_j \right\rangle
=
\left\langle \left(\prod_{l\in K_1} E_{N_l}(D_l)\right) x_j, \left(\prod_{k\in K_0} E_{M_k}(\Delta_k)\right)y_i \right\rangle.
$$
\end{enumerate}
\end{theorem}

\medskip
\begin{remark}
Recall that an {\it anti-automorphism} $\rho:\lh\to\lh$ is a vector space automorphism satisfying $\rho(T^*)=\rho(T)^*$ and $\rho(TS)=\rho(S)\rho(T)$ for all $T,S\in\lh$. It is well-known that each anti-automorphism $\rho:\lh\to\lh$, of order two (i.e., $\rho^2=I_{\lh}$), has the form $\rho(T)=CT^*C$ for every $T\in\lh$, where $C$ is a conjugation on $\H$. In the light of this fact, it is not hard to see that Theorem \ref{interpo.Nk.Mk} can be used to obtain conditions under which there exists an anti-automorphism $\rho:\lh\to\lh$ of order two such that
$$
\rho(T)=T
\quad\mbox{and}\quad
\rho(N_k)=M_k\;\mbox{for every }k\in K,
$$
where $T$ is an operator of the form \eqref{Dense.class}.
\end{remark}

\medskip

For a subspace $M$ of $\H$, denote by $P_M\in\lh$ the orthogonal projection of $\H$ onto $M$. The following corollary is a direct consequence of Theorem \ref{interpo.Nk.Mk}.

\begin{corollary}\label{interpo.subspaces}
Assume that $\H=\vee\{x_i,y_i : i\in I\}$, and let $M$ and $N$ be subspaces of $\H$ such that $P_M$ and $P_N$ commute. The following are equivalent:
\begin{enumerate}[\rm (i)]
\item There exists a conjugation $C$ on $\H$ such that $CM=N$ and $Cx_i=y_i$ for every $i\in I$.
\item For all $i,j\in I$ and $r,s\in\{0,1\}$,  we have
$$
\left\langle P_N^rP_M^s x_i, x_j \right\rangle=\left\langle P_N^sP_M^r y_j, y_i \right\rangle
\quad\mbox{and}\quad
\left\langle P_N^rP_M^s x_i, y_j \right\rangle=\left\langle P_N^sP_M^r x_j, y_i \right\rangle.
$$
\end{enumerate}
\end{corollary}

\medskip

Recall that a linear (or conjugate-linear) operator $V$ on $\H$ is called {\it partial isometry} if the restriction of $V$ to $\ker(V)^{\perp}$ is an isometry.

\medskip

If $N_k=M_k$ for every $k\in K$, Theorem \ref{interpo.Nk.Mk} becomes:

\begin{theorem}\label{interpo.Nk.Csym}
The following statements are equivalent:
\begin{enumerate}[\rm (i)]

\item There exists a conjugation $C$ on $\H$ such that $CN_kC=N_k^*$ and $Cx_i=y_i$ for all $k\in K$ and $i\in I$.

\item There exists a conjugate-linear partial isometry $V$ on $\H$ such that $VN_k=N_k^*V$, $x_i\in\ker(V)^{\perp}$, $Vx_i=y_i$ and $Vy_i=x_i$ for all $k\in K$ and $i\in I$.

\item For all finite subsets $K_0\subseteq K$, Borel subsets $\{\Delta_k\}_{k\in K_0}$ of $\C$,  and $i,j\in I$ we have
$$
\left\langle \left(\prod_{k\in K_0} E_{N_k}(\Delta_k)\right) x_i,x_j \right\rangle
=
\left\langle \left(\prod_{k\in K_0} E_{N_k}(\Delta_k)\right) y_j,y_i \right\rangle
$$
and
$$
\left\langle \left(\prod_{k\in K_0} E_{N_k}(\Delta_k)\right)x_i,y_j \right\rangle
=
\left\langle \left(\prod_{k\in K_0} E_{N_k}(\Delta_k)\right) x_j,y_i \right\rangle.
$$
\end{enumerate}
\end{theorem}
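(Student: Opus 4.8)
The plan is to deduce everything from Theorem~\ref{vonNeumann.alg} applied to the abelian von Neumann algebra $\mathcal{M}=W^*(N_k:k\in K)$. The first thing to record is that $\mathcal{M}$ really is abelian: the $N_k$ mutually commute by hypothesis, and by the Fuglede--Putnam theorem each $N_j$ then commutes with every $N_k^*$ as well, so the $*$-algebra generated by $\{N_k\}_{k\in K}$ is commutative and hence so is its weak closure $\mathcal{M}$. With this in hand, statement~(i) is \emph{verbatim} condition~(i) of Theorem~\ref{vonNeumann.alg}, so the whole theorem reduces to three independent matchings: (i)$\Leftrightarrow$(ii), the correspondence of (iii) with condition~(ii) of Theorem~\ref{vonNeumann.alg}, and the correspondence of (iv) with condition~(iii) of Theorem~\ref{vonNeumann.alg}.

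For (i)$\Leftrightarrow$(ii) the implication (i)$\Rightarrow$(ii) is trivial since $N_k\in\mathcal{M}$. For the converse I would introduce the map $\Phi(T)=CT^*C$. A short computation (using $C^2=I$, the anti-unitarity of $C$, and the identity $(CAC)^*=CA^*C$) shows that $\Phi$ is a unital, isometric, WOT-continuous, complex-linear $*$-anti-automorphism of $\lh$, and that the hypothesis $CN_kC=N_k^*$ is exactly $\Phi(N_k)=N_k$ (equivalently $\Phi(N_k^*)=N_k^*$). The fixed-point set $\mathcal{F}=\{T:\Phi(T)=T\}$ is then a WOT-closed complex-linear subspace, stable under adjoints, with the crucial extra feature that it is closed under products of \emph{commuting} elements, because $\Phi(ST)=\Phi(T)\Phi(S)$ forces $\Phi(ST)=TS=ST$ whenever $S,T\in\mathcal{F}$ commute. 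Since the mutually commuting family $\{N_k,N_k^*\}$ lies in $\mathcal{F}$, so does the commutative $*$-algebra it generates, and then so does its WOT-closure $\mathcal{M}$; thus $CTC=T^*$ for every $T\in\mathcal{M}$, which is~(i).

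The equivalence of (iii) with Theorem~\ref{vonNeumann.alg}(ii) runs along the same lines. Condition~(ii) of Theorem~\ref{vonNeumann.alg} clearly gives~(iii) upon taking $T=N_k$, and the reverse extension from $VN_k=N_k^*V$ to $VT=T^*V$ for all $T\in\mathcal{M}$ uses that the intertwining set $\{T:VT=T^*V\}$ is, by the identical reasoning, a WOT-closed linear space closed under products of commuting elements and containing the generators $\{N_k,N_k^*\}$. It is convenient to first secure (i)$\Leftrightarrow$(ii), observe that (i) is Theorem~\ref{vonNeumann.alg}(i), and then invoke Theorem~\ref{vonNeumann.alg} itself to pass between its conditions (i), (ii), (iii); the direct extension argument is then only needed once.

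The step I expect to be the main obstacle is the refinement tying (iv) to Theorem~\ref{vonNeumann.alg}(iii), i.e.\ replacing the condition over \emph{all} projections $P\in\mathcal{M}$ by the condition over only the finite products $P_{K_0}=\prod_{k\in K_0}E_{N_k}(\Delta_k)$. First note these products are genuine commuting projections in $\mathcal{M}$, since the spectral projections of the $N_k$ mutually commute (again Fuglede--Putnam). One direction is immediate: each $P_{K_0}$ is a projection in $\mathcal{M}$, so Theorem~\ref{vonNeumann.alg}(iii) specialises to~(iv). For the converse I would observe that, as functions of the operator argument $A$, both prescribed identities $\langle Ax_i,x_j\rangle=\langle Ay_j,y_i\rangle$ and $\langle Ax_i,y_j\rangle=\langle Ax_j,y_i\rangle$ are equalities of WOT-continuous linear functionals, so the set $\mathcal{S}\subseteq\lh$ of operators satisfying all of them (over $i,j\in I$) is a WOT-closed complex-linear subspace. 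By~(iv) it contains every $P_{K_0}$. Approximating each $N_k$ in norm by spectral simple functions $\sum_m\lambda_m E_{N_k}(\Delta_m)$ and multiplying out, one sees that the linear span of the products $P_{K_0}$ is WOT-dense in $\mathcal{M}$; hence $\mathcal{M}\subseteq\mathcal{S}$, and in particular every projection $P\in\mathcal{M}$ satisfies the two identities, which is Theorem~\ref{vonNeumann.alg}(iii). The technical heart is exactly this density and closure bookkeeping: verifying that the span of the $P_{K_0}$ generates $\mathcal{M}$ and that $\mathcal{S}$ is robust enough (WOT-closed and linear) to absorb the passage to the weak closure.
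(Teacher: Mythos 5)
Your proposal is correct and rests on the same two pillars as the paper's own proof: the reduction to Theorem~\ref{vonNeumann.alg} applied to $\mathcal{M}=W^*(N_k:k\in K)$ (abelian by Fuglede), and the fact that the linear span of the products $\prod_{k\in K_0}E_{N_k}(\Delta_k)$ is WOT-dense in $\mathcal{M}$ while the functionals $T\mapsto\langle Tx_i,x_j\rangle-\langle Ty_j,y_i\rangle$ and $T\mapsto\langle Tx_i,y_j\rangle-\langle Tx_j,y_i\rangle$ are WOT-continuous; your treatment of (iv) is essentially identical to the paper's (iv)$\implies$(i). The difference is organizational: the paper closes a single cycle (i)$\implies$(ii)$\implies$(iii)$\implies$(iv)$\implies$(i), so the only nontrivial passage is (iv)$\implies$(i), with (ii)$\implies$(iii) obtained by taking $V=C$ and (iii)$\implies$(iv) by Lemma~\ref{TN=M*T} plus the computations \eqref{P.xi.xj}--\eqref{P.xi.yj}. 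You instead prove three independent two-way matchings, which obliges you to supply two extra ``extension'' arguments: the fixed-point set of the $*$-anti-automorphism $\Phi(T)=CT^*C$ for (ii)$\implies$(i), and the analogous intertwiner-set argument upgrading $VN_k=N_k^*V$ to $VT=T^*V$ on all of $\mathcal{M}$. Both are sound (and the $\Phi$ argument is a nice self-contained way to see (ii)$\implies$(i) without going around the cycle), but note one point you gloss over: in the partial-isometry case the hypothesis puts only $N_k$, not $N_k^*$, in the set $\{T:VT=T^*V\}$, so before your ``closed under products of commuting elements'' mechanism can be applied to the generators $\{N_k,N_k^*\}$ you need a conjugate-linear Fuglede step ($VN_k=N_k^*V\Rightarrow VN_k^*=N_kV$), which the paper supplies via Lemma~\ref{TN=M*T}; the cyclic arrangement sidesteps this entirely.
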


\medskip

\begin{remark}
Taking $N_k=0$, for every $k\in K$, in Theorem \ref{interpo.Nk.Csym}, we recapture \cite[Theorem 2.1]{Zhu.Li} which states that there exists a conjugation $C$ on $\H$ satisfying $Cx_i=y_i$ for every $i\in I$ if and only if
$
\langle x_i,y_j\rangle=\langle x_j,y_i\rangle
$
for all $i,j\in I$.
\end{remark}

\begin{remark}
One can use Theorem \ref{interpo.Nk.Csym} to show that if the subspaces $M$ and $N$ in Corollary \ref{interpo.subspaces} are equal, then the  condition ``$\H=\vee\{x_i,y_i : i\in I\}$'' is no longer needed.
\end{remark}

\medskip

\begin{corollary}\label{cmplx.sym.N+T}
Let $\{r_i\}_{i\in I}$ be a set of distinct positive scalars, and let $T=\sum_{i\in I}r_i x_i\otimes y_i$. The following statements are equivalent:
\begin{enumerate}[\rm (i)]
\item There exists a conjugation $C$ on $\H$ such that $N_k+\lambda T$ is $C$-symmetric for all $k\in K$ and $\lambda\in\C$.
\item There exist unimodular scalars $\lambda_i\in\C$, for $i\in I$, such that
$$
\lambda_i\left\langle \left(\prod_{k\in K_0} E_{N_k}(\Delta_k)\right) x_i,x_j \right\rangle
=
\lambda_j\left\langle \left(\prod_{k\in K_0} E_{N_k}(\Delta_k)\right) y_j,y_i \right\rangle
$$
and
$$
\lambda_i\left\langle \left(\prod_{k\in K_0} E_{N_k}(\Delta_k)\right)x_i,y_j \right\rangle
=
\lambda_j\left\langle \left(\prod_{k\in K_0} E_{N_k}(\Delta_k)\right) x_j,y_i \right\rangle
$$
for all finite subsets $K_0\subseteq K$, Borel subsets $\{\Delta_k\}_{k\in K_0}$ of $\C$, and $i,j\in I$.
\end{enumerate}
\end{corollary}

\begin{proof}
(ii)$\implies$(i). For every $i\in I$, put $\widetilde{x_i}=\lambda_i x_i$. It is easy to see that the orthonormal sets $\{\widetilde{x_i}\}_{i\in I}$ and $\{y_i\}_{i\in I}$ satisfy the assertion (iii) in Theorem \ref{interpo.Nk.Csym}. Thus, there exists a conjugation $C$ on $\H$ such that $CN_kC=N_k^*$ and $C\widetilde{x_i}=y_i$ for all $k\in K$ and $i\in I$. Moreover, since $C^{-1}=C$, we also have $Cy_i=\widetilde{x_i}$. Now, by \eqref{CxoyC}, one can check that $C(x_i\otimes y_i)C=y_i\otimes x_i$ for every $i\in I$. Therefore, $CTC=T^*$, and consequently, $C(N_k+\lambda T)C=(N_k+ \lambda T)^*$ for all $\lambda\in\C$ and $k\in K$.

\medskip

(i)$\implies$(ii). Since the class of $C$-symmetric operators is a vector space and $\lambda$ is arbitrary, we have $CN_kC=N_k^*$ for every $k\in K$ and $CTC=T^*$. The later equality yields $CT^*TC=(CT^*C)(CTC)=TT^*$, and consequently $C(T^*T-r_i I_{\H})C=TT^*-r_i I_{\H}$ for every $i\in I$. In particular, $C\ker(TT^*-r_i I_{\H})\subseteq\ker(T^*T-r_i I_{\H})$ for every $i\in I$. Therefore, $C\left(\vee\{x_i\}\right)=\vee\{y_i\}$ for every $i\in I$. Since $C$ is isometric, there exist unimodular scalars $\lambda_i$, for $i\in I$, such that $C(\lambda_i x_i)=y_i$ for every $i\in I$. Now, using Theorem \ref{interpo.Nk.Csym}, one can easily see that the scalars $\lambda_i$, $i\in I$, satisfy all the desired equalities.
\end{proof}

If $I$ is of cardinal one, then we get a linear version of Theorem \ref{interpo.Nk.Csym} as follows:

\begin{theorem}\label{interpo.Csym}
Let $x,y\in \H$ be unit vectors. The following statements are equivalent:
\begin{enumerate}[\rm (i)]
\item There exists a conjugation $C$ on $\H$ satisfying $CN_kC=N_k^*$ and $Cx=y$ for every $k\in K$.
\item There exists a unitary operator $U\in\lh$ satisfying $UN_k=N_kU$ and $Ux=y$ for every $k\in K$.
\item There exists a partial isometry $V\in\lh$ satisfying $x\in\ker(V)^{\perp}$, $VN_k=N_kV$ and $Vx=y$ for every $k\in K$.
\item For all finite subsets $K_0\subseteq K$ and Borel subsets $
\{\Delta_k\}_{k\in K_0}$ of $\C$, we have
$$
\left\Vert \left(\prod_{k\in K_0} E_{N_k}(\Delta_k)\right)x\right\Vert
=
\left\Vert \left(\prod_{k\in K_0} E_{N_k}(\Delta_k)\right)y\right\Vert.
$$
\item There exists a conjugation $C$ on $\H$ such that $N_k+\lambda x\otimes y$ is $C$-symmetric for all $k\in K$ and $\lambda\in\C$.
\end{enumerate}
\end{theorem}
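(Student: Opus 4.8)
The plan is to establish the cycle (iv) $\Rightarrow$ (i) $\Rightarrow$ (ii) $\Rightarrow$ (iii) $\Rightarrow$ (iv) and, separately, (i) $\Leftrightarrow$ (v), using Theorem \ref{interpo.Nk.Csym} as the engine. The equivalence (i) $\Leftrightarrow$ (iv) I would simply read off from Theorem \ref{interpo.Nk.Csym} by taking the index set to be a single point, $I=\{0\}$, $x_0=x$, $y_0=y$. Because the $N_k$ mutually commute, Fuglede's theorem makes their spectral measures commute, so every $P:=\prod_{k\in K_0}E_{N_k}(\Delta_k)$ is an orthogonal projection. The two families of scalar conditions in part (iv) of that theorem then degenerate: the ``cross'' identity $\langle Px,y\rangle=\langle Px,y\rangle$ is automatic, and the first identity reads $\|Px\|^2=\langle Px,x\rangle=\langle Py,y\rangle=\|Py\|^2$, which is exactly condition (iv) here.

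For (i) $\Rightarrow$ (ii) I would invoke Theorem \ref{interpo.Nk.Csym} a second time, applied to the pair $(x,x)$: here condition (iv) is the tautology $\|Px\|=\|Px\|$, so there is a conjugation $J$ with $JN_kJ=N_k^*$ for all $k$ and $Jx=x$. Taking the conjugation $C$ furnished by (i), I set $U:=CJ$. Being the product of two conjugations, $U$ is a unitary operator; it commutes with each $N_k$ since, using $CT^{*}C=(CTC)^{*}$, the hypothesis $CN_kC=N_k^{*}$ gives $CN_k^{*}=N_kC$, whence $UN_k=C(JN_k)=C(N_k^{*}J)=(CN_k^{*})J=N_kU$; and $Ux=C(Jx)=Cx=y$. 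The step (ii) $\Rightarrow$ (iii) is immediate, a unitary being a partial isometry with $\ker(U)=\{0\}$. For (iii) $\Rightarrow$ (iv), Fuglede's theorem upgrades $VN_k=N_kV$ to $VN_k^*=N_k^*V$, so $V$ lies in the commutant of $W^*(N_k:k\in K)$ and commutes with each projection $P$; then $\ker(V)^{\perp}$ reduces $P$, hence $Px\in\ker(V)^{\perp}$, and since $V$ is isometric there, $\|Py\|=\|VPx\|=\|Px\|$.

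To handle (i) $\Leftrightarrow$ (v), I would first record the two identities $C(x\otimes y)C=Cx\otimes Cy$ (for any conjugation $C$) and $(x\otimes y)^*=y\otimes x$. Together they show that, for a fixed $C$, all the operators $N_k+\lambda\,x\otimes y$ ($\lambda\in\C$) are $C$-symmetric if and only if $CN_kC=N_k^*$ for every $k$ (the case $\lambda=0$) together with $Cx\otimes Cy=y\otimes x$. Comparing rank-one operators, the last equality means $Cx=\alpha y$ for some scalar $\alpha$, and then $\|x\|=\|y\|$ forces $|\alpha|=1$, since $\|Cx\|=\|x\|$. Now $\bar\alpha C$ is again a conjugation satisfying $(\bar\alpha C)N_k(\bar\alpha C)=N_k^*$ and $(\bar\alpha C)x=\bar\alpha\alpha y=y$, which is (i). Conversely (i) yields (v) with the same $C$ and $\alpha=1$, because $Cx=y$ and $Cy=x$ give $Cx\otimes Cy=y\otimes x$ and $\|y\|=\|Cx\|=\|x\|$.

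The only point where a real obstacle could appear is the direct implication (iv) $\Rightarrow$ (ii): manufacturing a unitary in the commutant of the family $\{N_k\}$ that sends $x$ to $y$ would ordinarily call for the multiplicity theory of abelian von Neumann algebras, i.e. a measurable field of unitaries $U(t)$ with $U(t)\xi(t)=\eta(t)$ almost everywhere. Passing instead through (i) and the factorization $U=CJ$ avoids this machinery completely, so once Theorem \ref{interpo.Nk.Csym} is in hand the remaining work is essentially the bookkeeping above.
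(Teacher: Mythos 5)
Your proposal is correct and uses essentially the same ingredients as the paper: the reduction of (i)$\Leftrightarrow$(iv) to Theorem \ref{interpo.Nk.Csym} with a singleton index set, the product $U=CJ$ of two conjugations (with $J$ obtained from the pair $(x,x)$) to produce the unitary, the commutation of $V$ with the spectral projections for (iii)$\Rightarrow$(iv), and the rank-one identity $C(x\otimes y)C=(Cx)\otimes(Cy)$ forcing $Cx=\alpha y$ for (v). The only difference is cosmetic: the paper closes the cycle as (i)$\Rightarrow$(v)$\Rightarrow$(ii)$\Rightarrow$(iii)$\Rightarrow$(iv)$\Rightarrow$(i), deploying the $U=\overline{\alpha}CJ$ trick inside (v)$\Rightarrow$(ii), whereas you prove (i)$\Rightarrow$(ii) directly and handle (v)$\Rightarrow$(i) by rescaling $C$ to $\overline{\alpha}C$.
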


\medskip

The proofs of Theorems \ref{interpo.Nk.Mk}, \ref{interpo.Nk.Csym}, and \ref{interpo.Csym} will be given later in this section, following the necessary preliminary preparations.

\medskip

\begin{remark}
A slight reformulation of \cite[Theorem 2.1]{Amara.Oudghiri} shows that if $N \in \lh$ is a normal operator, $U \in \lh$ is a unitary operator commuting with $N$, and $x \in \H$, then for every $\lambda \in \C$, there exists a conjugation $C$ on $\H$ such that $N+\lambda x \otimes Ux$ is $C$-symmetric. It can be seen that the implication (ii)$\implies$(v) in Theorem \ref{interpo.Csym} generalizes this result, as the conjugation $C$ is independent of $\lambda$.
\end{remark}

\medskip

A subspace $M$ of $\H$ is said to be {\it hyperinvariant} for $T\in\lh$ if $M$ is invariant under every operator that commutes  with $T$. Recently, in \cite{Mashreghi.Ptak.RossI}, Mashreghi et all. proved that $M$ is hyperinvariant for a unitary operator $U\in\lh$ if and only if $CM\subseteq M$ for every conjugation $C$ on $\H$ satisfying $CUC=U^*$. As a consequence of Theorem \ref{interpo.Csym}, this result is extended to all normal operators.

\begin{proposition}\label{hyperinvariant}
Let $N\in\lh$ be normal and $M$ a subspace of $\H$. The following statements are equivalents:
\begin{enumerate}[\rm (i)]
\item $M$ is hyperinvariant for $N$.
\item $TM\subseteq M$ for every conjugate-linear operator $T$ on $\H$ satisfying $TN=N^*T$.
\item $CM\subseteq M$ for every conjugation $C$ on $\H$ satisfying $CNC=N^*$.
\end{enumerate}
\end{proposition}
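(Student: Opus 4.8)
The plan is to prove the cycle of implications (i) $\Rightarrow$ (ii) $\Rightarrow$ (iii) $\Rightarrow$ (i). The organizing principle is the classical description of hyperinvariance for a normal operator: since $N$ is normal, Fuglede's theorem gives $\{N\}'=\{N,N^*\}'$, so the commutant of $N$ is a von Neumann algebra and in particular is $*$-closed. Consequently $M$ is invariant under every operator commuting with $N$ if and only if $M$ reduces every such operator, i.e.\ if and only if $P_M$ commutes with $\{N\}'$, which by the double commutant theorem means $P_M\in\{N\}''=W^*(N)$. I would record this equivalence at the outset, as both nontrivial implications use it. I would also fix, once and for all, a distinguished conjugation $C_0$ coming from the spectral (multiplication) representation of $N$; this $C_0$ satisfies $C_0NC_0=N^*$, $C_0N^*C_0=N$, and, more importantly, $C_0TC_0=T^*$ for every $T\in W^*(N)$ (in the model $N\cong M_z$ on some $L^2(\mu)$ one takes $C_0f=\overline f$).

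For (i) $\Rightarrow$ (ii), let $T$ be a bounded conjugate-linear operator with $TN=N^*T$. The key observation is that $A:=C_0T$ is a bounded \emph{linear} operator lying in $\{N\}'$: indeed $AN=C_0TN=C_0N^*T=(C_0N^*C_0)C_0T=NC_0T=NA$. Hyperinvariance of $M$ then yields $AM\subseteq M$, that is $C_0TM\subseteq M$. Since $M$ is hyperinvariant we have $P_M\in W^*(N)$, and the distinguished conjugation satisfies $C_0P_MC_0=P_M^*=P_M$, so $C_0$ commutes with $P_M$ and hence $C_0M=M$. Applying $C_0$ to the inclusion $C_0TM\subseteq M$ and using $C_0^2=I$ gives $TM\subseteq C_0M=M$, as required.

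The implication (ii) $\Rightarrow$ (iii) is immediate: any conjugation $C$ with $CNC=N^*$ is a bounded conjugate-linear operator satisfying $CN=N^*C$, so (ii) directly forces $CM\subseteq M$. The substance is (iii) $\Rightarrow$ (i), which I would prove by contraposition and where Theorem \ref{interpo.Csym} is the decisive tool. Assume $M$ is not hyperinvariant, so $P_M\notin W^*(N)=\{N\}''$. Since $\{N\}'$ is a von Neumann algebra it is spanned by its unitaries; hence if every unitary $U\in\{N\}'$ commuted with $P_M$ we would get $P_M\in\{N\}''$, a contradiction. Therefore there is a unitary $U\in\{N\}'$ with $UM\neq M$, equivalently $UM\not\subseteq M$, so one can choose $x\in M$ with $y:=Ux\notin M$; both $x$ and $y$ are non-zero. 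Now $U$ is a unitary commuting with $N$ and $Ux=y$, so by the equivalence (ii) $\Leftrightarrow$ (i) of Theorem \ref{interpo.Csym} (applied to the singleton family $\{N\}$) there is a conjugation $C$ with $CNC=N^*$ and $Cx=y$. This $C$ satisfies $Cx=y\notin M$ with $x\in M$, so $CM\not\subseteq M$, contradicting (iii).

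The main obstacle is precisely the construction of the witnessing conjugation in (iii) $\Rightarrow$ (i): knowing only that \emph{some} element of $\{N\}'$ fails to preserve $M$, one must manufacture a single conjugation intertwining $N$ and $N^*$ that actually moves a vector out of $M$. Directly symmetrizing a given unitary (e.g.\ trying $C=UC_0$) fails, because $UC_0$ is a conjugation only when $U$ is symmetric with respect to $C_0$, which need not hold. The clean resolution is to pass from an arbitrary non-preserving element of the von Neumann algebra $\{N\}'$ to a non-preserving \emph{unitary}, and then invoke Theorem \ref{interpo.Csym} to convert the commuting unitary $U$ into the desired conjugation $C$; this is exactly the point where the interpolation machinery developed earlier does the heavy lifting. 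A secondary technical point to handle carefully is boundedness in (ii): I read ``operator'' as bounded, so that $C_0T\in\lh$ and the reduction to $\{N\}'$ is legitimate.
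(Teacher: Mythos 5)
Your proof is correct in substance, and the decisive step, (iii)$\implies$(i), is essentially the paper's: both arguments reduce to unitaries in $\{N\}'$ (using that a von Neumann algebra is spanned by its unitaries) and then invoke Theorem \ref{interpo.Csym} to manufacture a conjugation $C$ with $CNC=N^*$ sending a chosen $x\in M$ to $Ux$. The paper runs this directly (for every unitary $T\in\{N\}'$ and every $x\in M$, conclude $Tx=Cx\in M$) rather than by contraposition, which sidesteps a small inaccuracy in your version: for a unitary $U$ on an infinite-dimensional space, $UM\neq M$ is \emph{not} equivalent to $UM\not\subseteq M$ (e.g.\ the bilateral shift and $M=\bigvee\{e_n:n\geq0\}$). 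Your argument survives because if $UM\subsetneq M$ then $U^*M\not\subseteq M$ and $U^*\in\{N\}'$ as well, but you should either insert that repair or argue directly as the paper does. Where you genuinely diverge is (i)$\implies$(ii): the paper quotes Radjavi--Rosenthal to write $P_M=E_N(\Delta)$ and then applies Lemma \ref{TN=M*T} to get $TE_N(\Delta)=E_N(\Delta)T$, whereas you fix a canonical conjugation $C_0$ with $C_0SC_0=S^*$ for all $S\in W^*(N)$, observe that $C_0T$ is a linear element of $\{N\}'$, and combine $C_0TM\subseteq M$ with $C_0M=M$. Both are valid; your route trades the explicit spectral-projection description of hyperinvariant subspaces for the softer fact $P_M\in\{N\}''=W^*(N)$ (which you correctly justify via Fuglede and the double commutant theorem), at the cost of having to construct and use the distinguished conjugation $C_0$ --- whose existence, as you note, comes from the multiplication-operator model and is exactly the kind of statement the paper's Theorem \ref{vonNeumann.alg} packages. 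Your reading of ``operator'' as bounded in (ii) matches the paper's intent.
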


\begin{proof}
(i)$\implies$(ii). Assume that $M$ is hyperinvariant for $N$, and let $T$ be a conjugate-linear operator on $\H$ such that $TN=N^*T$. Clearly, it suffices to show that $TP_{M}=P_{M}T$. According to \cite[Proposition 6.9]{Radjavi.Rosenthal}, there exists a Borel subset $\Delta\subseteq\C$ such that $P_{M}=E_N(\Delta)$. Consequently, Lemma \ref{TN=M*T} implies that $TP_{M}=TE_N(\Delta)=E_N(\Delta)T=P_{M}T$.

\medskip

(ii)$\implies$(iii) If $C$ is a conjugation on $\H$ such that $CNC=N^*$, and since $C^{-1}=C$, it follows that $CN=N^*C$, which leads to the desired containment.

\medskip

(iii)$\implies$(i). We need to show that for each $T\in\{N\}'$, the commutant of $N$, $TM\subseteq M$. Additionally, since $\{N\}'$ is a von Neumann algebra, it is spanned by its unitary operators (\cite[p. 61, Proposition 13.3 (b)]{Conway.Op.Th}). Consequently, we can assume without loss of generality that $T$ is unitary. Let $x\in M$ be non-zero, and put $y=Tx$. Then, according to Theorem \ref{interpo.Csym}, there exists a conjugation $C$ on $\H$ satisfying $CNC=N^*$ and $Cx=y$. Hence, by hypothesis, $Tx=y=Cx\in M$. Since $x$ was arbitrary, we conclude that $TM\subseteq M$.
\end{proof}

The following example shows that the result in Proposition \ref{hyperinvariant} cannot be extended to an arbitrary operator in $\lh$.

\begin{example}
Let $x$ and $y$ be orthonormal vectors in $\H$, and put $T=A+{\rm i}B$ where $A=x\otimes x$ and $B=(x+y)\otimes (x+y)$. Clearly, $A$ and $B$ are self-adjoint operators; moreover, $A=\frac{1}{2}(T+T^*)$. It follows that if $C$ is a conjugation on $\H$ such that $CTC=T^*$, then $CAC=A$. Hence, if $M=\vee\{x\}$, then $CM=M$. Therefore, the subspace $M$ satisfies  condition (iii) in Proposition \ref{hyperinvariant}; however, $T\in\{T\}'$ and $TM\nsubset M$.
\end{example}

\medskip

Given a conjugation $C$ on $\H$, an operator $T\in\lh$ is said to be {\it $C$-normal} if $CTT^*C=T^*T$. When $T$ is $C$-normal for some conjugation $C$, we say that $T$ is {\it conjugate-normal}. It is easy to verify that if $T$ is $C$-symmetric or $C$-skew symmetric, then it is also $C$-normal. Consequently, conjugate-normality is considered a more general type of symmetry. For further details and properties of such operators, the interested reader is referred to \cite{Wang.Zhao.Zhu}.

\medskip

Another consequence of Theorem \ref{interpo.Csym} is the following result, which characterizes normal operators in terms of complex symmetric and conjugate-normal ones.

\begin{corollary}
Let $T\in\lh$. The following statements are equivalent:
\begin{enumerate}[\rm (i)]
\item $T$ is normal.
\item For every unit vector $x\in \H$, there exists a conjugation $C$ on $\H$ such that $T$ is $C$-symmetric and $Cx=x$.
\item For every unit vector $x\in \H$, there exists a conjugation $C$ on $\H$ such that $T$ is $C$-normal and $Cx=x$.
\end{enumerate}
\end{corollary}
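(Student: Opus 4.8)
The plan is to establish the cycle (i)$\implies$(ii)$\implies$(iii)$\implies$(i). The first implication rests on the machinery already developed in Theorem \ref{interpo.Csym}, the second is essentially an observation recorded above, and the third is a short direct computation which I expect to be the conceptual heart of the argument.

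For (i)$\implies$(ii), suppose $T$ is normal and fix a non-zero $x\in H$. I would apply Theorem \ref{interpo.Csym} to the singleton family consisting of the single normal operator $T$ (which trivially commutes with itself) and to the pair of vectors $x$ and $y:=x$. Condition (iv) of that theorem then asks for $\Vert E_T(\Delta)x\Vert=\Vert E_T(\Delta)y\Vert$ for every Borel set $\Delta\subseteq\C$, which holds trivially since $y=x$. The implication (iv)$\implies$(i) of Theorem \ref{interpo.Csym} therefore yields a conjugation $C$ with $CTC=T^*$ and $Cx=y=x$. As $x$ was an arbitrary non-zero vector, (ii) follows.

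The implication (ii)$\implies$(iii) is immediate from the observation recorded before the corollary that $C$-symmetry implies $C$-normality: if $CTC=T^*$, then conjugating both sides gives $CT^*C=T$, whence $CTT^*C=(CTC)(CT^*C)=T^*T$. Thus the very conjugation that makes $T$ $C$-symmetric makes it $C$-normal, and the normalization $Cx=x$ is untouched.

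The substance lies in (iii)$\implies$(i). Set $A:=TT^*$ and $B:=T^*T$, two positive self-adjoint operators, so that $C$-normality reads $CAC=B$. Given a non-zero $x$ together with a conjugation $C$ satisfying $Cx=x$ and $CAC=B$, I would compute $\langle Bx,x\rangle=\langle CACx,x\rangle=\langle CAx,x\rangle$, using $Cx=x$. Applying the conjugation identity $\langle Cu,v\rangle=\langle Cv,u\rangle$ with $u=Ax$ and $v=x$, and then $Cx=x$ once more, gives $\langle CAx,x\rangle=\langle Cx,Ax\rangle=\langle x,Ax\rangle$; since $A$ is self-adjoint this last scalar is real and equals $\langle Ax,x\rangle$. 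Hence $\langle (A-B)x,x\rangle=0$ for every non-zero $x$, and trivially for $x=0$, so the quadratic form of $A-B$ vanishes identically on $H$; by polarization $A=B$, i.e. $TT^*=T^*T$, which is exactly the normality of $T$. The only place demanding care is this chain of identities, where one must keep track of the conjugate-linearity of $C$ and exploit the self-adjointness of $A$ to convert $\langle x,Ax\rangle$ into the real quantity $\langle Ax,x\rangle$; once the equality of quadratic forms is in hand, normality follows by a standard argument with no further input.
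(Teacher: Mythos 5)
Your proposal is correct and follows essentially the same route as the paper: (i)$\implies$(ii) via Theorem \ref{interpo.Csym} with $y=x$, the trivial passage from $C$-symmetry to $C$-normality, and then the same quadratic-form computation $\langle T^*Tx,x\rangle=\langle TT^*x,x\rangle$ using $Cx=x$, the antilinearity identity and self-adjointness, followed by polarization. (The paper mislabels its final implication as (iii)$\implies$(ii), but its content is exactly your (iii)$\implies$(i).)
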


\begin{proof}
The implication (i)$\implies$(ii) follows directly from Theorem \ref{interpo.Csym} applied to the normal operator $T$ and the vectors $x=y$.

(ii)$\implies$(iii) is obvious.

(iii)$\implies$(i). Let $x\in \H$ be a unit vector. Then, by hypothesis, there exists a conjugation $C$ on $\H$ such that $Cx=x$ and $CTT^*C=T^*T$. It follows that
\begin{align*}
\langle T^*Tx,x\rangle = \langle CTT^*Cx,x\rangle=\langle Cx,CCTT^*Cx \rangle=\langle x,TT^*x \rangle=\langle TT^*x,x \rangle.
\end{align*}
Since $x$ was arbitrary, we infer that $T^*T=TT^*$, the desired equality.
\end{proof}

\subsection{Proof of Theorems \ref{interpo.Nk.Mk}, \ref{interpo.Nk.Csym}, and \ref{interpo.Csym}}

The proofs require some preparation and will be provided at the end of this section. In what follows, we denote the set of all complex polynomials in $n$  variables by $\P{n}$. We begin with the following technical lemma.

\begin{lemma}\label{E.poly}
Let $\{ N_i \}_{1\leq i\leq n} \subset\lh$ and $\{ M_i \}_{1\leq i\leq n}\subset\mathcal{B}(\K)$ be two finite sets of mutually commuting normal operators, and let $x,y\in \H$ and $h,k\in \K$ be vectors such that
$$
\left\langle \prod_{i=1}^n \left(E_{N_i}(\Delta_i)\right) x,y \right\rangle
=
\left\langle\prod_{i=1}^n \left(E_{M_i}(\Delta_i)\right) h,k\right\rangle\quad\mbox{for every Borel subset $\Delta_i\subseteq\C$}.
$$
Then, for every $p\in\P{2n}$, we have
$$
\left\langle p(N_1,N_1^*, \dots ,N_n,N_n^*)x,y \right\rangle=\left\langle p(M_1,M_1^*, \dots ,M_n,M_n^*) h,k\right\rangle.
$$
\end{lemma}

\begin{proof}
Let $p\in\P{2n}$. By the commutativity assumption, we may suppose that $p(\alpha_1,\lambda_1,\dots, \alpha_n,\lambda_n)=\prod_{j=1}^{n}\alpha_i^{r_i}\lambda_i^{s_i}$ where $r_i,s_i\in\N$. For each $i\in\{ 1,\dots,n\}$, put $f_i(z)=z^{r_i}\overline{z}^{s_i}\chi_{K_i}(z)$ where $K_i=\sigma(N_i)\cup\sigma(M_i)$ and $\chi_{K_i}$ is the characteristic function of $K_i$. Then, for every $i\in\{ 1,\dots,n\}$, $f_i$ is a bounded Borel function on $\C$. Additionally,
\begin{equation}\label{fi(Ni),fi(Mi)}
f_i(N_i)=N_i^{r_i}N_i^{*s_i}
\quad\mbox{and}\quad
f_i(M_i)=M_i^{r_i}M_i^{*s_i}
\quad\mbox{for every }i\in\{ 1,\dots,n\}.
\end{equation}

Fix Borel subsets $\{\Delta_i\}_{2\leq i\leq n}\subseteq\C$ and denote by $\mu_1$ and $\nu_1$ the complex-valued measures on Borel subsets of $\C$ given by
$$
\mu_1(\Delta)=\left\langle E_{N_1}(\Delta) \prod_{i=2}^n \left(E_{N_i}(\Delta_i)\right)x,y\right\rangle
\quad\mbox{and}\quad
\nu_1(\Delta)=\left\langle E_{M_1}(\Delta) \prod_{i=2}^n \left(E_{M_i}(\Delta_i)\right)h,k\right\rangle.
$$
Then,
$$
\left\langle N_1^{r_1}N_1^{*s_1} \prod_{i=2}^n \left(E_{N_i}(\Delta_i)\right)x,y \right\rangle
=\left\langle f_1(N_1) \prod_{i=2}^n \left(E_{N_i}(\Delta_i)\right)x,y\right\rangle=\int f_1{\,\rm d}\mu_1
$$
and
$$
\left\langle M_1^{r_1}M_1^{*s_1} \prod_{i=2}^n \left(E_{M_i}(\Delta_i)\right)h,k \right\rangle
=\left\langle f_1(M_1) \prod_{i=2}^n \left(E_{M_i}(\Delta_i)\right)h,k\right\rangle=\int f_1{\,\rm d}\nu_1.
$$
As the measures $\mu_1$ and $\nu_1$ are presumed equal by hypothesis, we deduce
$$
\left\langle N_1^{r_1}N_1^{*s_1} \prod_{i=2}^n \left(E_{N_i}(\Delta_i)\right)x,y \right\rangle
=\left\langle M_1^{r_1}M_1^{*s_1} \prod_{i=2}^n \left(E_{M_i}(\Delta_i)\right)h,k \right\rangle.
$$
By the Fuglede Theorem, we obtain $\mu_2(\Delta_2)=\nu_2(\Delta_2)$ where
$$
\mu_2(\Delta_2):=\left\langle E_{N_2}(\Delta_2) N_1^{r_1}N_1^{*s_1} \prod_{i=3}^n \left(E_{N_i}(\Delta_i)\right)x,y \right\rangle
$$
and
$$
\nu_2(\Delta_2):=\left\langle E_{M_2}(\Delta_2) M_1^{r_1}M_1^{*s_1} \prod_{i=3}^n \left(E_{M_i}(\Delta_i)\right)h,k \right\rangle.
$$
Since $\Delta_2$ was arbitrary, the complex-valued measures $\mu_2$ and $\nu_2$ are equal. Integrating $f_2$ with respect to these measures and taking into account \eqref{fi(Ni),fi(Mi)}, we obtain
$$
\left\langle N_2^{r_2}N_2^{*s_2} N_1^{r_1}N_1^{*s_1} \prod_{i=3}^n \left(E_{N_i}(\Delta_i)\right)x,y \right\rangle
=\left\langle M_2^{r_2}M_2^{*s_2}  M_1^{r_1}M_1^{*s_1} \prod_{i=3}^n \left(E_{M_i}(\Delta_i)\right)h,k \right\rangle.
$$
The desired equality will follow by continuing the previous process.
\end{proof}

\medskip

\begin{proposition}\label{Wisom}
Let $\mathcal{A}=\{A_k\}_{k\in K}\subset\lh$ and $\mathcal{B}=\{B_k\}_{k\in K}\subset\lk$ be two sets of mutually commuting normal operators, and let $A=\{a_i\}_{i\in I}\subseteq \H$ and  $B=\{b_i\}_{i\in I}\subseteq \K$ be two subsets of vectors such that
$$
\left\langle \left(\prod_{k\in K_0} E_{A_k}(\Delta_k)\right) a_i,a_j \right\rangle
=\left\langle \left(\prod_{k\in K_0} E_{B_k}(\Delta_k)\right)b_j,b_i \right\rangle
$$
for all finite subsets $K_0\subseteq K$, Borel subsets $\{\Delta_k \}_{k\in K_0}$ of $\C$ and $i,j\in I$.

Then, there exists a conjugate-linear surjective isometry $W:L_{\mathcal{A},A}\to L_{\mathcal{B},B}$ such that
$$
Wa_i=b_i\quad\mbox{and}\quad Wp(T_{k_1},T_{k_1}^*,\dots, T_{k_n},T_{k_n}^* )=p(S_{k_1},S_{k_1}^*,\dots, S_{k_n},S_{k_n}^* )^*W
$$
for all $i\in I$, $n\in\Zp$, $p\in\P{2n}$ and $\{k_l\}_{1\leq l\leq n}\subseteq K$, where $T_{k_i}$ and $S_{k_i}$ are the restrictions of $A_{k_i}$ and $B_{k_i}$ to $L_{\mathcal{A},A}$ and $L_{\mathcal{B},B}$, respectively.
\end{proposition}

\begin{proof}
According to Lemma \ref{E.poly}, for every subset $\{k_l\}_{1\leq l\leq n}\subseteq K$, every polynomial $p\in\P{2n}$, and all $i,j\in I$, we have
\begin{equation}\label{p.xi.xj}
\left\langle p(A_{k_1},A_{k_1}^*,\dots,A_{k_n},A_{k_n}^*)a_i,a_j\right\rangle
=
\left\langle p(B_{k_1},B_{k_1}^*,\dots,B_{k_n},B_{k_n}^*)b_j,b_i\right\rangle.
\end{equation}
For convenience, put
$$
\H_0=\spn\big\lbrace p(A_{k_1},A_{k_1}^*,\dots,A_{k_n},A_{k_n}^*)a_i: n\in\Zp, p\in\P{2n}, \{k_l\}_{1\leq l\leq n}\subseteq K, i\in I\big\rbrace
$$
and
$$
\K_0=\spn\big\lbrace p(B_{k_1},B_{k_1}^*,\dots,B_{k_n},B_{k_n}^*)b_i: n\in\Zp, p\in\P{2n}, \{k_l\}_{1\leq l\leq n}\subseteq K , i\in I\big\rbrace,
$$
and let $W:\H_0\to \K_0$ be the conjugate-linear transformation given by 
$$
W\left(p(A_{k_1},A_{k_1}^*,\dots,A_{k_n},A_{k_n}^*)a_i \right)= p(B_{k_1},B_{k_1}^*,\dots,B_{k_n},B_{k_n}^*)^*b_i
$$
for all $n\geq 1$, $p\in\P{2n}$, $\{k_l\}_{1\leq l\leq n}\subseteq K$ and $i\in I$.

We will first show that $W$ is well-defined. Arbitrarily choose $h\in \H_0$; then, there exists a finite subset $J$ of $I$, $\{k_{i,l}\}_{1\leq l\leq n_i}\subseteq K$, for $i\in J$, and polynomials $p_i\in\P{2n_i}$, for $i\in J$, such that
$$
h=\sum_{i\in J}p_i(A_{k_{i,1}},A_{k_{i,1}}^*,\dots,A_{k_{i,n_i}},A_{k_{i,n_i}}^*)a_i.
$$
Then,
\begin{align*}
\Vert h\Vert^2
&=\left\langle \sum_{i\in J}p_i(A_{k_{i,1}},A_{k_{i,1}}^*,\dots,A_{k_{i,n_i}},A_{k_{i,n_i}}^*)a_i , \sum_{j\in J}p_j(A_{k_{j,1}},A_{k_{j,1}}^*,\dots,A_{k_{j,n_j}},A_{k_{j,n_j}}^*)a_j
\right\rangle
\\
&=\sum_{i\in J} \sum_{j\in J} \left\langle p_i(A_{k_{i,1}},A_{k_{i,1}}^*,\dots,A_{k_{i,n_i}},A_{k_{i,n_i}}^*)a_i , p_j(A_{k_{j,1}},A_{k_{j,1}}^*,\dots,A_{k_{j,n_j}},A_{k_{j,n_j}}^*)a_j
\right\rangle
\\
&=\sum_{i\in J} \sum_{j\in J} \left\langle p_i(A_{k_{i,1}},A_{k_{i,1}}^*,\dots,A_{k_{i,n_i}},A_{k_{i,n_i}}^*) p_j(A_{k_{j,1}},A_{k_{j,1}}^*,\dots,A_{k_{j,n_j}},A_{k_{j,n_j}}^*)^* a_i , a_j
\right\rangle.
\end{align*}
It follows by \eqref{p.xi.xj} that
\begin{align*}
\Vert h\Vert^2
&=\sum_{i\in J} \sum_{j\in J} \left\langle p_i(B_{k_{i,1}},B_{k_{i,1}}^*,\dots,B_{k_{i,n_i}},B_{k_{i,n_i}}^*) p_j(B_{k_{j,1}},B_{k_{j,1}}^*,\dots,B_{k_{j,n_j}},B_{k_{j,n_j}}^*)^* b_j , b_i
\right\rangle
\\
&=\sum_{i\in J} \sum_{j\in J} \left\langle p_j(B_{k_{j,1}},B_{k_{j,1}}^*,\dots,B_{k_{j,n_j}},B_{k_{j,n_j}}^*)^* b_j , p_i(B_{k_{i,1}},B_{k_{i,1}}^*,\dots,B_{k_{i,n_i}},B_{k_{i,n_i}}^*)^* b_i
\right\rangle
\\
&=  \left\langle \sum_{j\in J} p_j(B_{k_{j,1}},B_{k_{j,1}}^*,\dots,B_{k_{j,n_j}},B_{k_{j,n_j}}^*)^* b_j ,
\sum_{i\in J} p_i(B_{k_{i,1}},B_{k_{i,1}}^*,\dots,B_{k_{i,n_i}},B_{k_{i,n_i}}^*)^* b_i
\right\rangle
\\
&=\Vert Wh\Vert^2.
\end{align*}
Hence, if $h,k\in \H_0$ are such that $\Vert h-k\Vert=0$, we get $\Vert Wh-Wk\Vert=\Vert W(h-k)\Vert=\Vert h-k\Vert=0$. Therefore, $W$ is a well-defined conjugate-linear isometry. Moreover, by its construction, $W$ is surjective. By density of $\H_0$ and $\K_0$ in $L_{\mathcal{A},A}$ and $L_{\mathcal{B},B}$, respectively, it follows that $W$ can be extended to a conjugate-linear surjective isometry from $L_{\mathcal{A},A}$ to $L_{\mathcal{B},B}$, which we also denote by $W$. Furthermore, it can be seen that $Wa_i=b_i$ for every $i\in I$.

Now, let $n\in\Zp$, $p\in\P{2n}$ and $\{k_l\}_{1\leq l\leq n}\subseteq K$, and let us check that
$$
T:=Wp(T_{k_1},T_{k_1}^*,\dots, T_{k_n},T_{k_n}^* )=S:=p(S_{k_1},S_{k_1}^*,\dots, S_{k_n},S_{k_n}^* )^*W.
$$
Let $m\in\Zp$, $q\in\P{2m}$, $\{t_r\}_{1\leq r\leq m}\subseteq K$ and $i\in I$, and put $h=q(A_{t_1},A_{t_1}^*,\dots, A_{t_m},A_{t_m}^*)a_i$. Then,
\begin{align*}
Th&=Wp(T_{k_1},T_{k_1}^*,\dots, T_{k_n},T_{k_n}^* )h\\
&=Wp(A_{k_1},A_{k_1}^*,\dots, A_{k_n},A_{k_n}^* )h\\
&=Wp(A_{k_1},A_{k_1}^*,\dots, A_{k_n},A_{k_n}^* )q(A_{t_1},A_{t_1}^*,\dots, A_{t_m},A_{t_m}^*)a_i\\
&=q(B_{t_1},B_{t_1}^*,\dots, B_{t_m},B_{t_m}^*)^*p(B_{k_1},B_{k_1}^*,\dots, B_{k_n},B_{k_n}^* )^*b_i\\
&=p(B_{k_1},B_{k_1}^*,\dots, B_{k_n},B_{k_n}^* )^* q(B_{t_1},B_{t_1}^*,\dots, B_{t_m},B_{t_m}^*)^*b_i\\
&= p(S_{k_1},S_{k_1}^*,\dots, S_{k_n},S_{k_n}^* )^*Wh=Sh.
\end{align*}
Since $n$, $q$, $\{k_l\}_{1\leq l\leq n}$ and $i$ were arbitrary, and taking into account that $T$ and $S$ are both conjugate-linear, we obtain $Th=Sh$ for every $h\in \H_0$. Consequently, the desired equality follows by density of $\H_0$ in $L_{\mathcal{A},A}$.
\end{proof}

The following lemma will be very useful throughout the paper.

\begin{lemma}\label{TN=M*T}
Let $N\in\mathcal{B}(\H)$ and $M\in\mathcal{B}(\K)$ be normal operators, and let $T:\H\to \K$ be a bounded linear (or conjugate-linear) operator. The following statements hold:
\begin{enumerate}[\rm (i)]
\item If $T$ is linear (resp. conjugate-linear) and $TN=MT$ (resp. $TN=M^*T$), then
$$
T E_{N}(\Delta)=E_{M}(\Delta)T\quad\mbox{for every Borel subset }\Delta\subseteq\C.
$$
\item If $T$ is linear (resp. conjugate-linear) and $TN=-MT$ (resp. $TN=-M^*T$), then
$$
T E_{N}(\Delta)=E_{M}(-\Delta)T\quad\mbox{for every Borel subset }\Delta\subseteq\C.
$$
\end{enumerate}
\end{lemma}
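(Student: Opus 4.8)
The plan is to reduce all four cases to the linear case of statement (i), and to handle that case by the Fuglede--Putnam theorem together with the passage from polynomials to bounded Borel functions already exploited in Lemma \ref{E.poly}. First I would treat the linear case of (i): assuming $T$ is linear with $TN=MT$, the Fuglede--Putnam--Rosenblum theorem yields $TN^*=M^*T$, whence $Tp(N,N^*)=p(M,M^*)T$ for every $p\in\C[x,y]$. Fixing $h\in H$ and $k\in K$ and comparing the complex measures $\langle E_N(\cdot)h,T^*k\rangle$ and $\langle E_M(\cdot)Th,k\rangle$, the polynomial identity says these measures integrate every $p\in\C[x,y]$ --- hence, restricting to the compact set $\sigma(N)\cup\sigma(M)$ and using uniform approximation of continuous functions by such polynomials, every continuous function --- to the same value, so the two measures coincide. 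Evaluating on $\chi_\Delta$ then gives $\langle TE_N(\Delta)h,k\rangle=\langle E_M(\Delta)Th,k\rangle$ for all $h,k$, that is, $TE_N(\Delta)=E_M(\Delta)T$.

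For the conjugate-linear case of (i) I would linearize. Fix any conjugation $J$ on $K$ and set $S:=JT$, which is linear. From $TN=M^*T$ one gets $SN=JM^*T=(JM^*J)S$, and $JM^*J$ is again normal, since a conjugation carries normal operators to normal operators. Applying the linear case already proved to $S$ and the pair $(N,JM^*J)$ gives $SE_N(\Delta)=E_{JM^*J}(\Delta)S$. It then remains to transport the spectral measure back through $J$: using the standard identities $E_{JAJ}(\Delta)=JE_A(\overline{\Delta})J$ and $E_{A^*}(\Delta)=E_A(\overline{\Delta})$, where $\overline{\Delta}=\{\bar z:z\in\Delta\}$, one computes $E_{JM^*J}(\Delta)=JE_{M^*}(\overline{\Delta})J=JE_M(\Delta)J$; since $T=JS$ and $J^2=I$, this collapses to $TE_N(\Delta)=E_M(\Delta)T$.

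Finally, statement (ii) follows from (i) by a sign change. In the linear case $TN=-MT$ I would set $N':=-N$, so that $TN'=MT$ and (i) gives $TE_{N'}(\Delta)=E_M(\Delta)T$; since $E_{-N}(\Delta)=E_N(-\Delta)$ this reads $TE_N(-\Delta)=E_M(\Delta)T$, and replacing $\Delta$ by $-\Delta$ produces $TE_N(\Delta)=E_M(-\Delta)T$. In the conjugate-linear case $TN=-M^*T$ I would instead set $M':=-M$, normal with $(M')^*=-M^*$, so that $TN=(M')^*T$ and the conjugate-linear case of (i) gives $TE_N(\Delta)=E_{-M}(\Delta)T=E_M(-\Delta)T$. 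The main obstacle is the conjugate-linear case: one must both justify the antilinear Fuglede--Putnam step (handled here by composing with $J$) and keep track of the complex-conjugation and reflection of Borel sets when pushing the spectral measure through $J$ and through the substitutions $N\mapsto -N$ and $M\mapsto -M$; getting these reflections right is where the bookkeeping is delicate, whereas the analytic content is entirely contained in the classical Fuglede--Putnam theorem and in the measure-theoretic extension already carried out in Lemma \ref{E.poly}.
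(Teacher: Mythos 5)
Your proof is correct, but it reaches part (i) by a genuinely different route than the paper. For the linear case the paper does not invoke Fuglede--Putnam on polynomials at all: it forms the block operators $L=N\oplus M$ and $A=\left[\begin{smallmatrix}0&0\\T&0\end{smallmatrix}\right]$ on $H\oplus K$, notes $AL=LA$, and applies the strong form of Fuglede's theorem (commutation with a normal operator implies commutation with its spectral measure) to get $AE_L(\Delta)=E_L(\Delta)A$, which reads off as $TE_N(\Delta)=E_M(\Delta)T$ directly. Your version instead derives $TN^*=M^*T$, intertwines polynomials, and then upgrades to spectral projections via Stone--Weierstrass and uniqueness of regular complex Borel measures; this costs you an extra measure-theoretic step that the block trick gets for free, but it is sound (and note that Lemma \ref{E.poly} in the paper goes in the opposite direction, so you genuinely need the density argument you sketch rather than that lemma). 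For the conjugate-linear case the paper chooses a \emph{specific} conjugation $C$ with $CMC=M^*$, reduces to the linear case via $(CT)N=M(CT)$, and then proves $CE_M(\Delta)C=E_M(\Delta)$ by WOT-approximation of $\chi_\Delta(M)$ by polynomials in $M,M^*$; you take an \emph{arbitrary} conjugation $J$ and instead track the reflection of Borel sets through the identities $E_{JAJ}(\Delta)=JE_A(\overline{\Delta})J$ and $E_{A^*}(\Delta)=E_A(\overline{\Delta})$, whose composition conveniently cancels the conjugations. Both are valid; the paper's choice avoids having to state those transformation identities, while yours avoids having to prove $CE_M(\Delta)C=E_M(\Delta)$ for the special conjugation. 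You should, however, supply at least a one-line verification of the two ``standard identities'' you quote, since they carry the whole weight of the sign/conjugation bookkeeping. Part (ii) is handled essentially as in the paper (the paper substitutes $M\mapsto-M$ in both subcases and uses $\chi_\Delta(-M)=\chi_{-\Delta}(M)$; your substitution $N\mapsto-N$ in the linear subcase is an equivalent variant).
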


\begin{proof}
(i) Suppose first that $T$ is linear and $TN=MT$, and put
$$
L=
\begin{bmatrix}
N & 0\\
0 & M
\end{bmatrix}
\begin{array}{l}
\H\\
\K
\end{array}
\quad\mbox{and}\quad
A=
\begin{bmatrix}
0 & 0\\
T & 0
\end{bmatrix}
\begin{array}{l}
\H\\
\K
\end{array}.
$$
Then, $L$ and $A$ are bounded linear operators acting on the Hilbert space $\H\oplus \K$. Furthermore, one can verify that $AL=LA$. Since $L$ is normal, by the Fuglede Theorem, $AE_{L}(\Delta)=E_{L}(\Delta)A$ for every Borel subset $\Delta\subseteq\C$.

Fix a Borel subset $\Delta\subseteq\C$, and note that
$$
E_{L}(\Delta)=
\begin{bmatrix}
E_{N}(\Delta) & 0\\
0 & E_{M}(\Delta)
\end{bmatrix}
\begin{array}{l}
\H\\
\K
\end{array}.
$$
Therefore, the matrix equality $AE_{L}(\Delta)=E_{L}(\Delta)A$ implies $TE_{N}(\Delta)=E_{M}(\Delta)T$.

Assume now that $T$ is a conjugate-linear operator such that $TN=M^*T$, and let $\Delta\subseteq\C$ be a Borel subset. Since $M$ is normal, there exists a conjugation $C$ on $\K$ such that $CMC=M^*$. Hence, $TN=M^*T=(CMC)T$, and since $C^{-1}=C$, we obtain
\begin{equation}\label{(CT)N=M(CT)}
(CT)N=M(CT).
\end{equation}
Clearly, $CT:\H\to \K$ is a bounded linear operator. By our previous result, we have
\begin{equation}\label{T.conj.lin}
CTE_{N}(\Delta)=E_{M}(\Delta)CT.
\end{equation}
By \cite[p. 287, Lemma 8.7]{Conway.Func.An}, $E_{M}(\Delta)=\chi_{\Delta}(M)\in W^*(M)$, the von Neumann algebra generated by $M$. Hence, there exists a net $\{p_i\}_i$ in $\P{2}$ such that $p_i(M,M^*)$ converges to $E_{M}(\Delta)$ in the weak operator topology. One can easily check that $C p_i(M,M^*) C=p_i(M,M^*)^*$ for any $i$. Consequently, taking the limit in the weak operator topology, we get
$
CE_{M}(\Delta)C=E_{M}(\Delta)^*=E_{M}(\Delta).
$
Therefore, from \eqref{T.conj.lin}, we obtain
$$
TE_{N}(\Delta)=C^2TE_{N}(\Delta)=CE_{M}(\Delta)CT=E_{M}(\Delta)T,
$$
the desired equality.

\medskip

(ii) Suppose that $TN=-MT$ if $T$ is linear and that $TN=-M^*T$ if $T$ is conjugate-linear. Applying the first part of lemma to $N$ and $-M$, we get $TE_{N}(\Delta)=E_{-M}(\Delta)T$ for every Borel subset $\Delta\subseteq\C$. Since $E_{-M}(\Delta)=\chi_{\Delta}(-M)$ and $\chi_{\Delta}(-z)=\chi_{-\Delta}(z)$ for every $z\in\C$, we have
$$
TE_{N}(\Delta)=E_{-M}(\Delta)T=\chi_{\Delta}(-M)T=\chi_{-\Delta}(M)T=E_{M}(-\Delta)T.
$$
This completes the proof.
\end{proof}

\medskip

We are now in a position to prove Theorems \ref{interpo.Nk.Mk} and \ref{interpo.Nk.Csym}
 and \ref{interpo.Csym}.
 
\begin{proof}[Proof of Theorem \ref{interpo.Nk.Mk}]
(i)$\implies$(ii). Adopt the notations in Theorem \ref{interpo.Nk.Mk} (i). Since, for every $k\in K$, $CN_k=M_k^*C$ and $CM_k=N_k^*C$, it follows by Lemma \ref{TN=M*T} that for all $k\in K_0$ and $l\in K_1$, $CE_{M_k}(\Delta_k)=E_{N_k}(\Delta_k)C$ and $CE_{N_l}(D_l)=E_{M_l}(D_l)C$. Hence,
$$
\left(\prod_{k\in K_0} E_{N_k}(\Delta_k)\right) C=C\left(\prod_{k\in K_0} E_{M_k}(\Delta_k)\right)
\quad\mbox{and}\quad
\left(\prod_{l\in K_1} E_{M_l}(D_l)\right) C=C\left(\prod_{l\in K_1} E_{N_l}(D_l)\right).
$$
Thus, taking into account the equalities $Cx_i=y_i$ and $Cy_i=x_i$, we get
\begin{align*}
\left\langle \left(\prod_{k\in K_0} E_{N_k}(\Delta_k)\right) x_i, \left(\prod_{l\in K_1} E_{M_l}(D_l)\right)y_j \right\rangle
&=\left\langle \left(\prod_{k\in K_0} E_{N_k}(\Delta_k)\right) Cy_i, \left(\prod_{l\in K_1} E_{M_l}(D_l)\right)Cx_j \right\rangle\\
&=\left\langle C\left(\prod_{k\in K_0} E_{M_k}(\Delta_k)\right) y_i, C\left(\prod_{l\in K_1} E_{N_l}(D_l)\right)x_j \right\rangle\\
&=\left\langle \left(\prod_{l\in K_1} E_{N_l}(D_l)\right)x_j, \left(\prod_{k\in K_0} E_{M_k}(\Delta_k)\right) y_i \right\rangle.
\end{align*}
The other equality can be obtained in a similar way.

\medskip

(ii)$\implies$(i). Put $K_1=K_2=K$ and let $\widetilde{K}$ be the disjoint union of $K_1$ and $K_2$. Let $\mathcal{A}=\{ A_k \}_{k\in \widetilde{K}}$ and $\mathcal{B}=\{ B_k \}_{k\in \widetilde{K}}$ where
$$
A_k = \begin{cases}
  N_k &  \text{ if } k\in K_1; \\
  M_k  &  \text{ if } k\in K_2,
\end{cases}
\quad\mbox{and}\quad
B_k = \begin{cases}
  M_k  &  \text{ if } k\in K_1; \\
  N_k  &  \text{ if } k\in K_2.
\end{cases}
$$
Now, define $\widetilde{I}$ to be the disjoint union of $I_1$ and $I_2$ where $I_1=I_2=I$, and let $A=\{ a_i \}_{i\in \widetilde{I}}$ and $B=\{ b_i \}_{i\in \widetilde{I}}$ be such that
$$
a_i = \begin{cases}
  x_i  &  \text{ if } i\in I_1; \\
  y_i  &  \text{ if } i\in I_2,
\end{cases}
\quad\mbox{and}\quad
b_i = \begin{cases}
  y_i &  \text{ if } i\in I_1; \\
  x_i &  \text{ if } i\in I_2.
\end{cases}
$$
Since, for all subsets $\{k_l\}_{l=1}^4\subseteq K$ and Borel subsets $\{\Delta_{k_l}\}_{l=1}^4$ of $\C$, the orthogonal projections $E_{N_k}(\Delta_{k_1})$, $E_{N_k}(\Delta_{k_2})$, $E_{M_k}(\Delta_{k_3})$ and $E_{M_k}(\Delta_{k_4})$ are mutually commuting, one can easily see that $\mathcal{A}=\{ A_k \}_{k\in \widetilde{K}}$, $\mathcal{B}=\{ B_k \}_{k\in \widetilde{K}}$, $A$ and $B$ satisfy the conditions in Proposition \ref{Wisom}. Consequently, and since $L_{\mathcal{A},A}=L_{\mathcal{B},B}=L_{\mathcal{N}\cup\mathcal{M},X\cup Y}=\H$, there exists a conjugate-linear surjective isometry $C: \H\to \H$ such that
\begin{equation}\label{Ca_i=b_i}
Ca_i=b_i\quad\mbox{and}\quad Cp(A_{k_1},A_{k_1}^*,\dots, A_{k_n},A_{k_n}^* )=p(B_{k_1},B_{k_1}^*,\dots, B_{k_n},B_{k_n}^* )^*C
\end{equation}
for all $i\in I$, $n\in\Zp$, $p\in\P{2n}$ and $\{k_l\}_{1\leq l\leq n}\subseteq \widetilde{K}$. Furthermore, by definition of $\mathcal{A}=\{ A_k \}_{k\in \widetilde{K}}$, $\mathcal{B}=\{ B_k \}_{k\in \widetilde{K}}$, $A$ and $B$, the roles of  $A_{k_l}$ (resp. $a_i$) and $B_{k_l}$ (resp. $b_i$) can be interchanged in \eqref{Ca_i=b_i}; that is,
\begin{equation}\label{Cb_i=a_i}
Cb_i=a_i\quad\mbox{and}\quad Cp(B_{k_1},B_{k_1}^*,\dots, B_{k_n},B_{k_n}^* )=p(A_{k_1},A_{k_1}^*,\dots, A_{k_n},A_{k_n}^* )^*C.
\end{equation}
In particular, we have
\begin{equation*}
Cx_i=y_i \quad\mbox{and}\quad CN_k=M_k^*C\quad\mbox{for every $k\in K$}.
\end{equation*}
Now, we only need to show that $C$ is a conjugation. Let us first show that $C^2=I_{\H}$.  Since $\H=L_{\mathcal{A},A}$, it suffices to show that $C^2h=h$ for every vector $h$ of the form $h=p(A_{k_1},A_{k_1}^*,\dots, A_{k_n},A_{k_n}^* )a_i$ where $i\in I$, $n\in\Zp$, $p\in\P{2n}$  and $\{k_l\}_{1\leq l\leq n}\subseteq \widetilde{K}$. Let $h$ be such vector; then, \eqref{Ca_i=b_i} and \eqref{Cb_i=a_i} imply
\begin{align*}
C^2h=C^2p(A_{k_1},A_{k_1}^*,\dots, A_{k_n},A_{k_n}^* )a_i&=Cp(B_{k_1},B_{k_1}^*,\dots, B_{k_n},B_{k_n}^* )^*b_i\\
&=p(A_{k_1},A_{k_1}^*,\dots, A_{k_n},A_{k_n}^* )a_i=h.
\end{align*}
Finally, the isometric property in Definition \ref{def.conj} (ii) can be obtain from \eqref{conjugate.relation} since $C^{\#}=C^{-1}=C$.
\end{proof}

\medskip

\begin{proof}[Proof of Theorem \ref{interpo.Nk.Csym}]
The implication (i)$\implies$(ii) is obvious.

\medskip

(ii)$\implies$(iii). Let $V_0:\ker(V)^{\perp}\to\ran(V)$ be the invertible conjugate-linear isometry  given by $V_0h=Vh$. Define the conjugate-linear map $V^{\#}:\H\to \H$ by $V^{\#}h=V_0^{-1}h$ if $h\in\ran(V)$ and $V^{\#}h=0$ if $h\in\ran(V)^{\perp}$. Then, $V^{\#}V$ is the orthogonal projection onto $\ker(V)^{\perp}$. Moreover, using the polarization identity, it follows that $\langle Vh,k\rangle=\langle V^{\#}k,h\rangle$ for all $h\in \ker(V)^{\perp}$ and $k\in \ran(V)$. Thus, 
\begin{equation}\label{conjugate.relation}
\langle Vh,k\rangle=\langle V^{\#}k,h\rangle\quad\mbox{for all }h,k\in \H.
\end{equation}

Let $K_0\subseteq K$ be a finite subset, and let $\{\Delta_k\}_{k\in K_0}$  be Borel subsets of $\C$. In view of Lemma \ref{TN=M*T}, for all $k\in K_0$, we have
$VE_{N_k}(\Delta_k)=E_{N_k}(\Delta_k)V$. Hence, $V\left(\prod_{k\in K_0}E_{N_k}(\Delta_k)\right)=\left(\prod_{k\in K_0}E_{N_k}(\Delta_k)\right)V$. Put $P=\prod_{k\in K_0}E_{N_k}(\Delta_k)$. Thus,
\begin{equation}\label{P.xi.xj}
\begin{split}
\langle Py_j,y_i\rangle=\langle PVx_j,Vx_i\rangle &=\langle VPx_j,Vx_i\rangle=\langle V^{\#}Vx_i,P x_j\rangle\\
&=\langle x_i,P x_j\rangle=\langle P x_i,x_j\rangle,
\end{split}
\end{equation}
and similarly, we have
\begin{equation}\label{P.xi.yj}
\begin{split}
\langle Px_j,y_i\rangle=\langle PVy_j,Vx_i\rangle &=\langle VPy_j,Vx_i\rangle=\langle V^{\#}Vx_i,P y_j\rangle\\
&=\langle x_i,P y_j\rangle=\langle P x_i,y_j\rangle.
\end{split}
\end{equation}

\medskip

(iii)$\implies$(i). Assume that $M_k=N_k$ for every $k\in K$. For convenience, put $\l_0=L_{\mathcal{N},X\cup Y}$ and $\l_1=\l_0^{\perp}$. Then, $\l_0$ reduces $N_k$ for every $k\in K$; additionally, $x_i,y_i\in \l_0$ for all $i\in I$. It is not hard to see that $\left\{ {N_k}_{\vert \l_0} \right\}_{k\in K}$, $\left\{ {M_k}_{\vert \l_0} \right\}_{k\in K}$, $X$ and $Y$ satisfy assertion (ii) of Theorem \ref{interpo.Nk.Mk}. Thus, by the same theorem, there exists a conjugation $C_0$ on $\l_0$ such that $C_0{N_k}_{\vert \l_0}C_0=\left({N_k}_{\vert \l_0}\right)^*$ and $C_0x_i=y_i$ for all $k\in K$ and $i\in I$.

On the other hand, since the normal operators ${N_k}_{\vert \l_1}$ are mutually commuting, they generate an abelian von Neumann algebra in $\mathcal{B}(\l_1)$. Hence, according to \cite[p. 52, Lemma II.2.8]{Davidson}, there exists a self-adjoint operator $A\in\mathcal{B}(\l_1)$ such that ${N_k}_{\vert \l_1}\in W^*(A)$ for every $k\in K$. By self-adjointness of $A$, there exists a conjugation $C_1$ on $\l_1$ such that $C_1AC_1=A$. Thus, any polynomial in $A$ is $C_1$-symmetric; moreover, since the class of $C_1$-symmetric operators is closed in the weak operator topology, we get
\begin{equation}\label{CommonConj}
C_1{N_k}_{\vert \l_1}C_1=\left({N_k}_{\vert \l_1}\right)^*\quad\mbox{for every $k\in K$}.
\end{equation}
Therefore, one can readily see that $C=C_0\oplus C_1$ is a conjugation on $\H=\l_0\oplus \l_1$ that satisfies all desired equalities.
\end{proof}

\medskip

\begin{proof}[Proof of Theorem \ref{interpo.Csym}]
The implication (ii)$\implies$(iii) is obvious.

\medskip

(iii)$\implies$(iv). Let $\Delta\subseteq\C$ be a Borel subset and $K_0\subseteq K$ a finite set. Then, $V\left(\prod_{k\in K_0} E_{N_k}(\Delta)\right)=\left(\prod_{k\in K_0} E_{N_k}(\Delta)\right)V$. Noting that $V^*V$ is the orthogonal projection onto $\ker(V)^{\perp}$ and letting $P=\prod_{k\in K_0} E_{N_k}(\Delta)$, we get
$$
\Vert Py\Vert^2=\langle P y,y\rangle=\langle PVx,Vx\rangle=\langle Px,V^*Vx\rangle=\langle Px,x\rangle=\Vert Px\Vert^2.
$$

\medskip

The implication (iv)$\implies$(i) follows from  Theorem \ref{interpo.Nk.Csym} (iii)$\implies$(i). In fact, for any finite subset $K_0\subseteq K$ and Borel subsets $\{ \Delta_k\}_{k\in K_0}$ of $\C$, the orthogonal projections $\{ E_{N_k}(\Delta_k)\}_{k\in K_0}$ are mutually commuting. Thus, their product $P$ is also an orthogonal projection; which gives
$$
\langle Px,x\rangle=\langle P^2x,x\rangle=\langle Px,Px\rangle=\Vert Px\Vert^2=\Vert Py\Vert^2=\langle Py,y\rangle.
$$

\medskip

(i)$\implies$(v). In view of \eqref{CxoyC}, for any $k\in K$ and $\lambda\in\C$, we have
$$
C\left( N_k+ \lambda x\otimes y\right)C=CN_kC+\overline{\lambda}y\otimes x=N_k^*+\overline{\lambda}(x\otimes y)^*=\left( N_k+ \lambda x\otimes y\right)^*;
$$
that is, $N_k+ \lambda x\otimes y$ is $C$-symmetric.

\medskip

(v)$\implies$(ii). Since the class of $C$-symmetric operators forms a subspace of $\lh$, we obtain, for every $k\in K$, $CN_kC=N_k^*$ and $C(x\otimes y)C=(x\otimes y)^*=y\otimes x$. Given that $x$ and $y$ have the same norm, the latter equality implies the existence of a unimodular $\alpha\in \C$ such that $Cx=\alpha y$.

Applying Theorem \ref{interpo.Nk.Csym} (iii)$\implies$(i) to the particular case $\{ x_i\}_i=\{ y_i\}_i=\{x\}$, we obtain a conjugation $J$ on $\H$ such that $JN_kJ=N_k^*$ for every $k\in K$ and $Jx=x$. 
Put $U=\overline{\alpha}CJ$; then, since $C$ and $J$ are both surjective isometries and $\alpha$ is unimodular, the linear operator $U:\H\to \H$ must be unitary. Moreover, we have 
$$
UN_k=\overline{\alpha}CJN_k=\overline{\alpha}CN_k^*J=\overline{\alpha}N_kCJ=N_kU
$$
and $Ux=\overline{\alpha}CJx=\overline{\alpha}Cx=\overline{\alpha}\alpha y=y$. This completes the proof of the theorem.
\end{proof}

\section{Interpolation theorems for conjugations : C-skew symmetric versions}\label{interpolation.section.Cskew}

A conjugate-linear operator $V$ on $\H$ is said to be {\it anti-unitary} if it is a surjective isometry. Clearly, every conjugation on $\H$ is anti-unitary.

For two operators $A$ and $B$ on Hilbert spaces, we denote $A \cong B$ if there exists an isometric isomorphism $U$ between the underlying Hilbert spaces such that $UAU^{-1}=B$; or equivalently, $UA = BU$. It is worth mentioning that both complex symmetry and skew symmetry are preserved under the equivalence relation $\cong$.

For $k \in \{ 1, 2, \dots, \infty \}$, where $\infty = \aleph_0$, let $\H^{(k)}$ denote the direct sum of $\H$ with itself $k$ times. Accordingly, we denote by $T^{(k)} \in \mathcal{B}(\H^{(k)})$ the direct sum of $T \in \lh$.

\subsection{Main results}

\begin{theorem}\label{interpo.Cskew1}
The following statements are equivalent:
\begin{enumerate}[\rm (i)]
\item There exists a conjugation $C$ on $\H$ such that $CN_kC=-N_k^*$ and $Cx_i=y_i$ for all $k\in K$ and $i\in I$.

\item There exists an anti-unitary operator $V$ on $\H$ such that $VN_kV^{-1}=-N_k^*$, $Vx_i=y_i$ and $Vy_i=x_i$ for all $k\in K$ and $i\in I$.

\item There exists an anti-unitary operator $V_1$ on $\l_1:=\left(L_{\mathcal{N},X\cup Y}\right)^{\perp}$ such that 
$$
V_1{N_k}_{\vert \l_1}V_1^{-1}=-\left({N_k}_{\vert \l_1}\right)^*
\quad\mbox{for every $k\in K$},
$$
and for all finite subsets $K_0\subseteq K$, Borel subsets $\{\Delta_k\}_{k\in K_0}$ of $\C$,  and $i,j\in I$ we have
\begin{equation*}\label{Nk,-Nk*1}
\left\langle \left(\prod_{k\in K_0} E_{N_k}(\Delta_k)\right) x_i,x_j \right\rangle
=
\left\langle \left(\prod_{k\in K_0} E_{N_k}(-\Delta_k)\right) y_j,y_i \right\rangle
\end{equation*}
and
\begin{equation*}\label{Nk,-Nk*2}
\left\langle \left(\prod_{k\in K_0} E_{N_k}(\Delta_k)\right)x_i,y_j \right\rangle
=
\left\langle \left(\prod_{k\in K_0} E_{N_k}(-\Delta_k)\right) x_j,y_i \right\rangle.
\end{equation*}
\end{enumerate}
\end{theorem}

\medskip

Before stating the second main result of this section, we recall some facts about normal operators on separable complex Hilbert spaces. Given a finite measure with compact support $\mu$ on $\C$, denote by $M_{\mu}$ the operator defined on $L^2(\mu)$ by $[M_{\mu}f](z)=zf(z)$ for all $f\in L^2(\mu)$. According to spectral multiplicity theory (see, for instance, \cite[p. 298, Theorem 10.16]{Conway.Func.An}), each normal operator $N$ acting on $\H$ is unitarily equivalent to
\begin{equation}\label{multip.decom}
M_{\mu_{\infty}}^{(\infty)}\oplus\bigoplus_{1\leq i <\infty} M_{\mu_i}^{(i)}
\end{equation}
where $\mu_i$, $1\leq i\leq \infty$ (some of which may be zero), are mutually singular finite measures on Borel subsets of $\C$, each with compact support. Moreover, the previous decomposition is unique in the sense that if $M$ is another normal operator with corresponding measures $\nu_i$, $1\leq i\leq \infty$, then $N\cong M$ if and only if, for every $i\in\{1,2,\dots,\infty\}$,
\begin{equation}\label{uniq.dec}
\mu_i(\Delta)=0 \;\;\Longleftrightarrow\;\; \nu_i(\Delta)=0\quad\mbox{for every Borel subset }\Delta\subseteq\C.
\end{equation}

In \cite[Theorem 1.11]{Li.Zhu}, Li et al. proved that $N$ is skew-symmetric if and only if the sequence $\{\mu_i\}_{1\leq i\leq \infty}$ can be chosen so that, for every $i\in\{1,2,\dots,\infty\}$,
\begin{equation}\label{mu(-delta)}
\mu_i(\Delta)=\mu_i(-\Delta)\quad\mbox{for every Borel subset }\Delta\subseteq\C.
\end{equation}

\medskip

Given a normal operator $N\in\lh$, we write $m(N)<\infty$ if the measure $\mu_{\infty}$ in \eqref{multip.decom} is zero; in other words, $M_{\mu_{\infty}}^{(\infty)}$ can be omitted from the decomposition \eqref{multip.decom}. This definition makes sense due to the uniqueness property described in \eqref{uniq.dec}.

\begin{theorem}\label{interpo.Cskew2}
Let $N\in\lh$ be a skew-symmetric normal operator, $N_0$ be the restriction of $N$ to $L_{\{N\}, X\cup Y}$, and assume that  $m\left({N_{0}}_{\vert \ker(N_{0})^{\perp}}\right)<\infty$. The following statements are equivalent:
\begin{enumerate}[\rm (i)]
\item There exists a conjugation $C$ on $\H$ such that $CNC=-N^*$ and $Cx_i=y_i$ for every $i\in I$.

\item For every Borel subset $\Delta\subseteq\C$ and all $i,j\in I$, we have
$$
\langle E_{N}(\Delta)x_i,x_j \rangle=\langle E_{N}(-\Delta)y_j,y_i \rangle
\quad\mbox{and}\quad
\langle E_{N}(\Delta)x_i,y_j \rangle=\langle E_{N}(-\Delta)x_j,y_i \rangle.
$$
\end{enumerate}
In particular, if $I$ is finite, then $m\left({N_{0}}_{\vert \ker(N_{0})^{\perp}}\right)<\infty$ and the previous statements are equivalent.
\end{theorem}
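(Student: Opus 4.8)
The plan is to derive everything from the general skew-symmetric interpolation Theorem \ref{interpo.Cskew1} together with the multiplicity characterization of skew-symmetric normal operators recorded in \eqref{mu(-delta)}. The implication (i)$\implies$(ii) is immediate: condition (ii) here is exactly the pair of moment identities appearing in Theorem \ref{interpo.Cskew1}(iii), so it already follows from the implication (i)$\implies$(iii) there (alternatively, one computes it directly from $CE_N(\Delta)=E_N(-\Delta)C$, which is Lemma \ref{TN=M*T}(ii)). The real content is the converse.

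For (ii)$\implies$(i), I would first observe that, since $E_{-N}(\Delta)=E_N(-\Delta)$, the two identities in (ii) are precisely hypotheses \eqref{a} and \eqref{b} of Theorem \ref{M=pmN} taken with $M=-N$. That theorem then produces a conjugation on $L_{N,Z}$ implementing $CN_ZC=-N_Z^{*}$; in particular $N_Z$ is skew-symmetric. Comparing with Theorem \ref{interpo.Cskew1}, I see that condition (iii) there will hold --- and hence (i) will follow --- as soon as I prove that the complementary summand $N_{\vert L_{N,Z}^{\perp}}$ is skew-symmetric.

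This last step is the crux, and I would settle it through spectral multiplicity theory. Fix a scalar spectral measure $\nu$ for $N$; since $N$ is skew-symmetric, its class is invariant under $z\mapsto -z$ by \eqref{mu(-delta)}, so $\nu$ may be taken symmetric, and skew-symmetry of a normal summand is equivalent to the symmetry $m(z)=m(-z)$ ($\nu$-a.e.) of its multiplicity function. Writing $N=N_Z\oplus N_1$ with $N_1:=N_{\vert L_{N,Z}^{\perp}}$, the multiplicity functions add, $m_N=m_{N_Z}+m_{N_1}$, and both $m_N$ and $m_{N_Z}$ are symmetric; I want to ``subtract'' and conclude $m_{N_1}(z)=m_{N_1}(-z)$. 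The obstruction is the ambiguity of $\infty-\infty$: at a point $z_0\neq 0$ with $m_{N_Z}(z_0)=\infty$, the symmetric values of $m_N$ and $m_{N_Z}$ at $\pm z_0$ impose no constraint on $m_{N_1}(\pm z_0)$. This is exactly where the hypothesis $m\big({N_Z}_{\vert \ker(N_Z)^{\perp}}\big)<\infty$ enters: it forces $m_{N_Z}(z)<\infty$ for $\nu$-a.e. $z\neq 0$, so off a $\nu$-null set one genuinely has $m_{N_1}(z)=m_N(z)-m_{N_Z}(z)$, whence symmetry of $m_N$ and $m_{N_Z}$ yields $m_{N_1}(z)=m_{N_1}(-z)$ (the subcase $m_N(z)=\infty$ being handled by $m_{N_1}(z)=\infty=m_{N_1}(-z)$). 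The fixed point $z=0$ needs no argument, since $-0=0$ makes the symmetry automatic there --- consistently with the fact that the kernel part of $N_1$ is a zero operator and thus trivially skew-symmetric, so infinite multiplicity of $N_Z$ at $0$ is harmless. Therefore $m_{N_1}$ is symmetric, $N_1$ is skew-symmetric, and (i) follows.

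For the closing assertion, when $I$ is finite the set $Z$ is finite, so $L_{N,Z}$ is generated by finitely many vectors under polynomials in $N$ and $N^{*}$; consequently $N_Z$ has uniformly bounded (hence finite) multiplicity, its infinite-multiplicity measure $\mu_{\infty}$ vanishes, and a fortiori $m\big({N_Z}_{\vert \ker(N_Z)^{\perp}}\big)<\infty$, so the equivalence applies. I expect the main obstacle to be the careful bookkeeping of multiplicity functions valued in $\{0,1,\dots,\infty\}$, and in particular recognizing that the single fixed point $0$ of $z\mapsto -z$ is precisely the spectral location that the finiteness hypothesis must --- and does --- protect against a failure of the cancellation $m_{N_1}=m_N-m_{N_Z}$.
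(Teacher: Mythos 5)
Your proposal is correct, and its skeleton matches the paper's: both directions reduce to Theorem \ref{interpo.Cskew1}, with the whole weight resting on showing that $N_1:=N_{\vert L_{N,Z}^{\perp}}$ is skew-symmetric, and the finite-$I$ addendum rests on the observation that $L_{N,Z}$ is spanned by finitely many star-cyclic reducing subspaces (the paper isolates this as Lemma \ref{twostarcyclics}). Where you diverge is in how you establish skew-symmetry of $N_1$. The paper simply invokes its Theorem \ref{directsumskew}: since $N_Z$ is skew-symmetric with $m\big({N_Z}_{\vert\ker(N_Z)^{\perp}}\big)<\infty$ and $N=N_Z\oplus N_1$ is skew-symmetric, the complementary summand must be skew-symmetric; the proof of that theorem proceeds through the canonical decomposition into mutually singular pieces supported on symmetric Borel sets $\Delta_i$, Lemma \ref{skew.D.-D}, and cancellation of star-cyclic summands via \cite[p.~295, Proposition 10.6]{Conway.Func.An}. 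You instead prove the needed special case (where the complementary summand is normal, which is all that is required here) directly by subtracting multiplicity functions relative to a symmetric scalar spectral measure, with the finiteness hypothesis exactly neutralizing the $\infty-\infty$ ambiguity away from the fixed point $0$. Both arguments hinge on the same cancellation phenomenon; your version is more streamlined and makes the role of the hypothesis transparent (and your remark that Example \ref{counterexample}-type failures live precisely at points of infinite multiplicity is on target), while the paper's detour through Theorem \ref{directsumskew} buys a standalone characterization of when $N\oplus T$ skew-symmetric forces $T$ skew-symmetric for arbitrary, not necessarily normal, $T$. Your appeals to standard multiplicity theory (additivity of multiplicity functions over reducing decompositions, and the bound on multiplicity by the number of generating vectors in the finite-$I$ case) are correct but are asserted rather than proved; the paper's Lemma \ref{twostarcyclics} supplies the latter in detail.
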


\medskip

The proofs of Theorems \ref{interpo.Cskew1} and \ref{interpo.Cskew2} will be given at the end of this section.

\medskip

The reader can see from Theorem \ref{interpo.Cskew1} that the implication (i)$\implies$(ii), in the previous theorem, is true without the assumption that $m\left({N_{Z}}_{\vert \ker(N_{Z})^{\perp}}\right)<\infty$. However, the following example shows that this condition is not superfluous for the implication  (ii)$\implies$(i).

\begin{example}\label{counterexample}
Suppose that $\H=\K_1\oplus \K_2$, where $\K_1$ and $\K_2$ are infinite-dimensional separable complex Hilbert spaces, and let $\{e_i\}_{i\geq1}$ and $\{f_i\}_{i\geq1}$ be orthonormal bases of $\K_1$ and $\K_2$, respectively. For each $i\geq1$, put $x_i=e_i\oplus0$  $y_i= 0\oplus f_{i+1}$. Let $N$ be the normal operator on $\H$ given by $N=I_{\K_1}\oplus -I_{\K_2}$. Clearly, $N\cong -N$; hence, by Lemma \ref{cong.skewsym}, $N$ is skew-symmetric.

For convenience, put $\H_1=\vee\{ x_i: i\geq1\}$, $\H_2=\vee\{ y_i: i\geq1\}$ and $\H_3=\vee\{ 0\oplus f_1\}$. Then,  $\H=\H_1\oplus \H_2\oplus \H_3$ and $L_{\{N\},X\cup Y}=\H_1\oplus \H_2$. Furthermore,
$$
{N_{0}}_{\vert \ker(N_{0})^{\perp}}={N_{0}}
=
\begin{bmatrix}
I_{\H_1} & 0 \\
0 & -I_{\H_2}
\end{bmatrix}
\begin{array}{l}
\H_1\\
\H_2
\end{array}
\cong M_{\delta_{1}}^{(\infty)}\oplus M_{\delta_{-1}}^{(\infty)} \cong \left( M_{\delta_{1}}\oplus M_{\delta_{-1}} \right)^{(\infty)}
\cong M_{\mu}^{(\infty)}
$$
where $\mu=\delta_{1}+\delta_{-1}$ and $\delta_a$ denotes the Dirac measure, at $a\in\C$, on Borel subsets of $\C$. Thus, $m\left({N_{0}}_{\vert \ker(N_{0})^{\perp}}\right) \nless \infty$.

Let us check that $N$ satisfies the conditions in Theorem \ref{interpo.Cskew2} (ii). One can easily see that, for every Borel subset $\Delta\subseteq\C$, the spectral projections $E_N(\Delta)$ and $E_N(-\Delta)$ fall into one of the following possibilities:
\begin{enumerate}[\rm (a)]
\item $E_N(\Delta)=E_N(-\Delta)=0$;
\item $E_N(\Delta)=E_N(-\Delta)=I_{\H}$;
\item $E_N(\Delta)=I_{\K_1}\oplus 0$ and $E_N(-\Delta)=0\oplus I_{\K_2}$;
\item $E_N(\Delta)=0\oplus I_{\K_2}$ and $E_N(-\Delta)=I_{\K_1}\oplus 0$.
\end{enumerate}
In all cases, we have
$$
\langle E_{N}(\Delta)x_i,x_j \rangle=\langle E_{N}(-\Delta)y_j,y_i \rangle
\quad\mbox{and}\quad
\langle E_{N}(\Delta)x_i,y_j \rangle=\langle E_{N}(-\Delta)x_j,y_i \rangle.
$$

Now, let us show that there exists no conjugation $C$ on $\H$ satisfying the first assertion of Theorem \ref{interpo.Cskew2}. Suppose for contradiction that such a conjugation exists. Then, 
$$
C\H_1=C\vee\{ x_i: i\geq1\}=\vee\{ Cx_i: i\geq1\}=\vee\{ y_i: i\geq1\}=\H_2,
$$
and since $C^{-1}=C$, we also have $C\H_2=\H_1$, and hence $C(\H_1+\H_2)=\H_1+\H_2$. As $C$ is a surjective isometry, it follows that $C\H_3=C(\H_1+\H_2)^{\perp}=(\H_1+\H_2)^{\perp}=\H_3$. Therefore, $C$ takes the form
$$
C=
\begin{bmatrix}
0 & * & 0\\
* & 0 & 0\\
0 & 0 & C_3
\end{bmatrix}
\begin{array}{l}
\H_1\\
\H_2\\
\H_3
\end{array}
$$
where $C_3$ is a conjugation on $\H_3$. Finally, since $\H_3$ reduces $N$ and $CNC=-N^*$, we obtain
$-I_{\H_3}=C_3(-I_{\H_3})C_3=C_3N_{\vert \H_3}C_3=-N_{\vert \H_3}^*=I_{\H_3}$; a contradiction.
\end{example}

\medskip

When $X=\{ x\}$, $Y=\{ y\}$, and $\{x,y\}$ are linearly independent, we can add the following statement to Theorem \ref{interpo.Cskew1}.

\begin{theorem}\label{interpo.Cskew.x.y}
Let $\{x,y\}$ be linearly independent vectors in $\H$. The following statements are equivalent:
\begin{enumerate}[\rm (i)]
\item There exists a conjugation $C$ on $\H$ satisfying $CN_kC=-N_k^*$, for every $k\in K$, and $Cx=y$.
\item There exists a conjugation $C$ on $\H$ such that $N_k+\lambda (x\otimes x - y\otimes y)$ is $C$-skew symmetric for all $k\in K$ and $\lambda\in\C$.
\end{enumerate}
\end{theorem}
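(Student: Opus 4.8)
The plan is to establish both implications by exploiting the algebraic structure of $C$-skew-symmetric operators, in close analogy with the proof of the implication (i)$\iff$(v) in Theorem \ref{interpo.Csym}. The key computational identity I would rely on is that, for any conjugation $C$ and any vectors $u,v$, one has $C(u\otimes v)C=(Cu)\otimes(Cv)$; this is stated and used in the proof of (i)$\implies$(v) of Theorem \ref{interpo.Csym}.

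For the implication (i)$\implies$(ii), suppose $C$ is a conjugation with $CNC=-N^*$ and $Cx=y$. Since $C$ is involutive, $Cy=C^2x=x$. I would then compute directly, for arbitrary $\lambda\in\C$,
\begin{align*}
C\big(N+\lambda(x\otimes x-y\otimes y)\big)C
&=CNC+\overline{\lambda}\big(C(x\otimes x)C-C(y\otimes y)C\big)\\
&=-N^*+\overline{\lambda}\big((Cx)\otimes(Cx)-(Cy)\otimes(Cy)\big)\\
&=-N^*+\overline{\lambda}\big(y\otimes y-x\otimes x\big)\\
&=-\big(N^*+\lambda(x\otimes x-y\otimes y)^*\big)\\
&=-\big(N+\lambda(x\otimes x-y\otimes y)\big)^*,
\end{align*}
using that $(x\otimes x)^*=x\otimes x$ and $(y\otimes y)^*=y\otimes y$ are self-adjoint. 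Thus the perturbed operator is $C$-skew-symmetric for every $\lambda$, with the single conjugation $C$ working uniformly in $\lambda$.

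For the converse (ii)$\implies$(i), suppose $C$ is a conjugation making $N+\lambda(x\otimes x-y\otimes y)$ $C$-skew-symmetric for every $\lambda\in\C$. Since the class of $C$-skew-symmetric operators is a (complex-linear) subspace of $\lh$ and is closed under the relevant limits, I would first extract two consequences by varying $\lambda$: taking $\lambda=0$ gives $CNC=-N^*$, and then subtracting yields that the rank-two self-adjoint perturbation $T:=x\otimes x-y\otimes y$ is itself $C$-skew-symmetric, i.e. $CTC=-T^*=-T$. Writing out $CTC=(Cx)\otimes(Cx)-(Cy)\otimes(Cy)$, the relation $CTC=-T$ becomes
$$
(Cx)\otimes(Cx)-(Cy)\otimes(Cy)=y\otimes y-x\otimes x.
$$
The main obstacle, and the crux of the argument, is to deduce from this rank-two operator identity that $Cx=y$ (after possibly adjusting by a unimodular scalar, which I will need to normalize away using the linear independence of $\{x,y\}$). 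The idea is that both sides are self-adjoint operators of rank at most two whose ranges are spanned by $\{Cx,Cy\}$ and $\{x,y\}$ respectively; comparing ranges and using that $\{x,y\}$ is linearly independent forces $\spn\{Cx,Cy\}=\spn\{x,y\}$, and then matching the positive/negative eigenspaces of the two self-adjoint operators pins down $Cx$ and $Cy$ up to phases. Here I expect to use that $C$ is isometric (so $\|Cx\|=\|x\|$, $\|Cy\|=\|y\|$) together with a spectral comparison of the two sides to conclude that the phase can be absorbed, giving $Cx=y$ exactly (and hence $Cy=x$). Once $Cx=y$ is secured, combined with $CNC=-N^*$ already obtained, statement (i) follows. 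I anticipate the phase-normalization step to be the delicate point, and I would handle it by replacing $C$ if necessary with a conjugation of the form $\alpha\mapsto$ a rotated conjugation, or by directly invoking the eigenvector structure of the self-adjoint rank-two operators to eliminate the unimodular ambiguity.
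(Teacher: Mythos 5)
Your implication (i)$\implies$(ii) is correct and coincides with the paper's computation, and your reduction of (ii)$\implies$(i) to the two identities $CNC=-N^*$ and $CTC=-T$ with $T=x\otimes x-y\otimes y$ is also the paper's starting point. However, the step you yourself flag as the crux contains a genuine gap: the claim that ``matching the positive/negative eigenspaces of the two self-adjoint operators pins down $Cx$ and $Cy$ up to phases'' does not follow as stated. The eigenvectors of $T$ on $\ran(T)=\spn\{x,y\}$ are \emph{not} $x$ and $y$: since $T$ is self-adjoint with trace zero (trace zero because $T$ is skew-symmetric, which also gives $\|x\|=\|y\|$), they are orthonormal vectors $e_1,e_2$ with eigenvalues $\pm\alpha$, $\alpha\neq0$, and $x,y$ are in general non-trivial combinations of $e_1,e_2$. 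The identity $CTC=-T$ only forces $Ce_1=ae_2$ and $Ce_2=ae_1$ for a single unimodular $a$; it says nothing directly about $Cx$. Likewise, rewriting the relation as $(Cx)\otimes(Cx)+x\otimes x=(Cy)\otimes(Cy)+y\otimes y$ and comparing ranges cannot by itself pin down $Cx$ up to a phase, because a positive rank-two operator admits many decompositions as a sum of two rank-one positive operators. So a further argument is genuinely required to conclude that $Cx$ is a unimodular multiple of $y$.

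The paper closes exactly this gap with an auxiliary conjugation: since $\|x\|=\|y\|$, the Zhu--Li interpolation theorem yields a conjugation $J$ with $Jx=y$; one checks that $JTJ=-T$ as well, so $J$ also interchanges the eigenlines, say $Je_1=be_2$ and $Je_2=be_1$. Because $C$ and $J$ are conjugate-linear and hence determined on the two-dimensional space $\spn\{e_1,e_2\}=\ran(T)$ by their values on $e_1,e_2$, one gets $Ch=a\overline{b}Jh$ for all $h$ in that space; in particular $Cx=a\overline{b}y$, and the phase is removed by passing to $\widetilde C=\overline{a}bC$, which is still a conjugation satisfying $\widetilde CN\widetilde C=-N^*$. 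Your final normalization idea (replacing $C$ by a rotated conjugation) is exactly right once the relation $Cx=\omega y$ with $\vert\omega\vert=1$ is in hand; it is the derivation of that relation that is missing from your outline. (It can alternatively be obtained by an explicit coordinate computation of $x$ and $y$ in the basis $e_1,e_2$ using the eigenvalue equations, but some argument of this kind must be supplied.)
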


The proof of the previous theorem does not require preliminary results and can be presented here.

\begin{proof}[Proof of Theorem \ref{interpo.Cskew.x.y}]
(i)$\implies$(ii). Let $C$ be a conjugation satisfying (i). Then, for all $k\in K$ and $\lambda\in\C$, we have
\begin{align*}
C\big(N_k+\lambda (x\otimes x -y\otimes y) \big)C &=CN_kC+\overline{\lambda}\big((Cx)\otimes(Cx)-(Cy)\otimes(Cy)\big)\\
&=-N_k^*+\overline{\lambda}(y\otimes y - x\otimes x)\\
&=-\left(N_k+\lambda (x\otimes x -y\otimes y) \right)^*.
\end{align*}

(ii)$\implies$(i). For convenience, put $T=x\otimes x -y\otimes y$. By the fact that the class of $C$-skew symmetric operators is a subspace of $\lh$, we clearly have $CN_kC=-N_k^*$, for every $k\in K$, and $CTC=-T^*=-T$. Noting that $T$ is skew-symmetric, it must have a skew-symmetric matrix with respect to some orthonormal basis. In particular, since $T$ is a rank-two operator, its trace is well-defined and should be zero. Combining this with the fact that $T$ is self-adjoint, it follows that there exists orthonormal vectors $e_1,e_2\in \H$ and $\alpha\in \R$ such that
$$
\ran(T)=\ker(T-\alpha I_{\H})\oplus\ker(T+\alpha I_{\H}),
$$
$$\ker(T-\alpha I_{\H})=\vee\{e_1\}\quad\mbox{and}\quad\ker(T+\alpha I_{\H})=\vee\{e_2\}.
$$
As $CTC=-T$, we get $C(T+\alpha I_{\H})C=-(T-\alpha I_{\H})$; hence
\begin{equation}\label{Cker}
C(\vee\{e_1\})=C \ker(T-\alpha I_{\H})=\ker(T+\alpha I_{\H})=\vee\{ e_2\}.
\end{equation}
Note that $x$ and $y$ have the same norm. In fact, $\langle x,x\rangle-\langle y,y\rangle$ is the trace of $T$. Hence, using \cite[Theorem 2.1]{Zhu.Li}, we obtain a conjugation $J$ on $\H$ satisfying $Jx=y$; one can check that $JTJ=-T$. The argument in \eqref{Cker} shows that $J(\vee\{e_1\})=\vee\{e_2\}$. Then, since $C$ and $J$ are isometries, there exist unimodular scalars $a,b\in\C$ such that $Ce_1=ae_2$ and $Je_1=be_2$. Furthermore, since $C^{-1}=C$ and $J^{-1}=J$, we get $Ce_2=a e_1$ and $Je_2=b e_1$. Noting that $C$ and $J$ are both conjugate-linear, we get $Ch=a\overline{b}Jh$ for every $h\in \vee\{e_1,e_2\}$. In particular, since $x\in\ran(T)=\vee\{e_1,e_2\}$, we obtain $Cx=a\overline{b}Jx=a\overline{b}y$. Put $\widetilde{C}=\overline{a}bC$; one can easily check that $\widetilde{C}$ is a conjugation that satisfies $\widetilde{C}N_k\widetilde{C}=-N_k^*$, for every $k\in K$, and $\widetilde{C}x=y$. This ends the proof of the theorem.
\end{proof}

\medskip

Next, we shall apply the main results of this section to characterize skew symmetry of operators of the form
\begin{equation}\label{ssh.dense}
\sum_{j\in J} r_j(e_j\otimes e_j -f_j\otimes f_j) + {\rm i}B
\end{equation}
where the set $\{e_j,f_j : j\in J\}\subset\H$ is orthonormal, $\{r_j\}_{j\in J}$ are distinct positive scalars and $B\in\lh$ is self-adjoint.

Before proceeding, it is worth noting that skew symmetric operators having the previous form are dense in $\ssh$. Let us prove this claim. Let $T\in\lh$ be a $C$-skew symmetric operator for some conjugation $C$ on $\H$. A quick examination of the cartesian decomposition of $T$ shows that we can write $T=A+{\rm i}B$ where $A$ and $B$ are $C$-skew symmetric self-adjoint operators. Using \cite[Lemma 3.2]{Bu.Zhu}, we obtain a sequence of self-adjoint diagonalizable $C$-skew symmetric operators $A_n$ that converges to $A$, with $\dim\ker(A_n - r I_{\H})=1$ whenever  $r$ is an eigenvalue of $A_n$. Now, by skew-symmetry of $A_n$, we have $\dim\ker(A_n - r I_{\H})=\dim\ker(A_n + r I_{\H})$ for all real scalars $r$. Consequently, $T$ can be approximated by operators of the form \eqref{ssh.dense}.

\begin{corollary}\label{skew.sym.criteria}
Let $T\in\lh$ be an operator of the form \eqref{ssh.dense} and denote by $B_1$ the restriction of $B$ to $\left(L_{\{B\},\{e_j,f_j : j\in J\}}\right)^{\perp}$. Then, $T$ is skew-symmetric if and only if $B_1\cong -B_1$ and there exists unimodular scalars $\lambda_j$, for $j\in J$, such that
\begin{equation}\label{skew.sym.dense}
\lambda_i\left\langle  E_{B}(\Delta) e_i,e_j \right\rangle
=
\lambda_j\left\langle  E_{B}(-\Delta) f_j,f_i \right\rangle
\;\mbox{and}\;
\lambda_i\left\langle  E_{B}(\Delta) e_i,f_j \right\rangle
=
\lambda_j\left\langle  E_{B}(-\Delta) e_j,f_i \right\rangle
\end{equation}
for every Borel subset $\Delta$ of $\C$.
\end{corollary}

\begin{proof}
Assume that \eqref{skew.sym.dense} holds for every Borel subset $\Delta$ of $\C$. Since $B_1\cong -B_1$, Lemma \ref{cong.skewsym} implies that $VB_1V^{-1}=-B_1^*$ for some anti-unitary operator $V$. Then, applying Theorem \ref{interpo.Cskew1} to the self-adjoint operator $B$ and the orthonormal subsets $\{\lambda_j e_j\}_{j\in J}$ and $\{ f_j\}_{j\in J}$, we obtain a conjugation $C$ on $\H$ such that $CBC=-B$ and $C(\lambda e_j)=f_j$ for every $j\in J$. One can check that $C(e_j\otimes e_j -f_j\otimes f_j)C=-(e_j\otimes e_j -f_j\otimes f_j)=-(e_j\otimes e_j -f_j\otimes f_j)^*$ for every $j\in J$. Thus, $T$ is $C$-skew symmetric as the sum of two $C$-skew symmetric ones. This proves the sufficiency.

\medskip

Suppose that $T$ is $C$-skew symmetric for some conjugation $C$ on $\H$, and denote $A=\sum_{j\in J} r_j(e_j\otimes e_j -f_j\otimes f_j)$. Then, $A$ and $B$ are both  $C$-skew symmetric. Moreover, for every $j\in J$, $\ker(A-r_j I_{\H})=\vee\{e_j\}$ and $\ker(A+r_j I_{\H})=\vee\{f_i\}$. As in \eqref{Cker}, we have that $Ce_j\in\spn\{f_j\}$ for every $j\in J$. Hence, there exist unimodular scalars $\lambda_j$, for $j\in J$, so that $C(\lambda_j e_j)=f_j$ for every $j\in J$. The necessity follows now by combining Theorem \ref{interpo.Cskew1} and Lemma \ref{cong.skewsym}.
\end{proof}

\begin{remark}
\begin{enumerate}[\rm (i)]
\item If the operator $T$ has rather the form
$$
T=B + {\rm i}\sum_{j\in J} r_j(e_j\otimes e_j -f_j\otimes f_j)
$$
with $\{e_j,f_j\}_{j\in J}$, $\{r_j\}_{j\in J}$ and $B$ being as described in \eqref{ssh.dense}, 
then, taking into account that skew symmetry is invariant under scalar multiplication, the equivalence in the previous corollary still holds true.  
\item If $J$ is finite, then using Theorem \ref{interpo.Cskew2}, one can make minor changes in the proof of Corollary \ref{skew.sym.criteria} to show that the condition ``$B_1\cong -B_1$'' can be replaced with the simpler one ``$B\cong -B$''.
\end{enumerate}

\end{remark}

%In fact, it is easy to see that each $C$-skew symmetric operator can be written as $A+{\rm i}B$ where $A$ and $B$ are $C$-skew symmetric self-adjoint operators.Hence, the density can easily be established by \cite[Lemma 3.2]{Bu.Zhu}.

The remainder of this section is devoted to proving Theorems \ref{interpo.Cskew1} and \ref{interpo.Cskew2}.

\subsection{Proof of Theorems \ref{interpo.Cskew1} and \ref{interpo.Cskew2}}

The following lemma provides conditions under which a family of mutually commuting normal operators have a common orthonormal basis in which their corresponding matrices are skew-symmetric. This result is inspired by \cite[Theorem 1.10]{Li.Zhu}.

\begin{lemma}\label{cong.skewsym}
Let $\{N_k\}_{k\in K}\subset\lh$ be mutually commuting normal operators. The following statements are equivalent:
\begin{enumerate}[\rm (i)]
\item There exists a conjugation $C$ on $\H$ such that
$$
CN_kC=-N_k^*\quad \mbox{for every $k\in K$}.
$$
\item There exists an anti-unitary operator $V$ on $\H$ such that $$
VN_kV^{-1}=-N_k^*\quad \mbox{for every $k\in K$}.
$$
\item There exists a unitary operator $U$ on $\H$ such that
$$
UN_kU^*=-N_k\quad \mbox{for every $k\in K$}.
$$
\end{enumerate}
\end{lemma}

\begin{proof}
The implication (i)$\implies$(ii) is obvious.

\medskip

(ii)$\implies$(iii). Let $V$ be an anti-unitary operator on $\H$ such that $VN_kV^{-1}=-N_k^*$ for every $k\in K$. As in \eqref{CommonConj}, there exists a conjugation $J$ on $\H$ that satisfies $JN_kJ=N_k^*$ for every $k\in K$. Hence, we have $VN_kV^{-1}=-JN_kJ$, and so, since $J^{-1}=J$, we get
\begin{equation}\label{JVN_k}
(JV)N_k(V^{-1}J)=-J^2N_kJ^2=-N_k \quad \mbox{for every $k\in K$}.
\end{equation}
Put $U=JV$; then $U\in\lh$ is unitary and $U^*=U^{-1}=(JV)^{-1}=V^{-1}J^{-1}=V^{-1}J$. Therefore, for any $k\in K$, $UN_kU^{*}=-N_k$. 

\medskip

(iii)$\implies$(i). It is easy to see that we may assume without loss of generality that $K$ is infinite. Let $U\in\lh$ be a unitary operator satisfying $UN_k=-N_kU$ for every $k\in K$. First, let us reduce the problem to the case where $\bigcap_{k\in K}\ker(N_k)=\{0\}$. Denote this subspace by $\l_{\infty}$. One can easily see that $\l_{\infty}$ reduces $U$, and that each $N_k$ has the form $N_k=M_k\oplus 0$ with respect to the decomposition $\H=\l_{\infty}^{\perp}\oplus \l_{\infty}$. Clearly, $\bigcap_{k\in K}\ker(M_k)=\{0\}$; hence, if a conjugation $C$ on $\l_{\infty}^{\perp}$ exists such that $CM_kC=-M_k^*$ for each $k\in K$, then one can see that the map $C\oplus C_{\infty}$, where $C_{\infty}$ is an arbitrary conjugation on $\l_{\infty}$, is a conjugation on $\H$ that satisfies all the desired equalities.

Note that we can further assume that there exists a countable subset $\{N_{k_p}\}_{p\in\Zp}$ of $\{N_k\}_{k\in K}$ such that $\bigcap_{p\in\Zp}\ker(N_{k_p})=\{0\}$. In fact, since $\H=\bigvee\left( \bigcup_{k\in K} \ker(N_k)^{\perp} \right)$, one can use the separability of $\H$ to show that there exists a countable subset $\{k_p\}_{p\in\Zp}\subseteq K$ so that $\H=\bigvee\left( \bigcup_{p\in\Zp} \ker(N_{k_p})^{\perp} \right)$.

Fix a Borel subset $\Delta_0\subset\C$ such that $\C\setminus\Delta_0=-\Delta_0\cup\{0\}$. By Lemma \ref{TN=M*T}, we have
$
UE_{N_{k_1}}(\Delta_0)=E_{N_{k_1}}(-\Delta_0)U
$
and
$
UE_{N_{k_1}}(-\Delta_0)=E_{N_{k_1}}(\Delta_0)U.
$
Consequently,
\begin{equation}\label{Uran}
U\ran(E_{N_{k_1}}(\Delta_0))=\ran(E_{N_{k_1}}(-\Delta_0))
\;\;\mbox{and}\;\;
U\ran(E_{N_{k_1}}(-\Delta_0))=\ran(E_{N_{k_1}}(\Delta_0)).
\end{equation}

For convenience, put $\H_{+}=\ran(E_{N_{k_1}}(\Delta_0))$, $\H_{-}=\ran(E_{N_{k_1}}(-\Delta_0))$ and $\l_1=\ran(E_{N_{k_1}}(\{0\}))=\ker(N_{k_1})$. Given that $\C=\Delta_0 \cup (-\Delta_0) \cup \{0\}$ is a partition, it follows from \eqref{Uran} that $U$ has the form
$$
U=\begin{bmatrix}
0 & * & 0\\
W & 0 & 0\\
0 & 0 & U_1
\end{bmatrix}
\begin{array}{l}
\H_{+}\\
\H_{-}\\
\l_1
\end{array}
$$
where $W$ and $U_1$ are isometric isomorphisms between Hilbert spaces. Additionally, since for every $k\in K$,
$$
N_kE_{N_{k_1}}(\Delta_0)=E_{N_{k_1}}(\Delta_0)N_k
\quad\mbox{and}\quad
N_kE_{N_{k_1}}(-\Delta_0)=E_{N_{k_1}}(-\Delta_0)N_k,
$$
we can also write
$$
N_k=\begin{bmatrix}
N_{k,+} & 0 & 0\\
0 & N_{k,-} & 0\\
0 & 0 & R_{k,1}
\end{bmatrix}
\begin{array}{l}
\H_{+}\\
\H_{-}\\
\l_1
\end{array}
$$
where $N_{k,+}$, $N_{k,-}$ and $R_{k,1}$, $k\in K$, are normal operators such that the $R_{k,1}$'s are mutually commuting.

Since $UN_k=-N_kU$, simple matrix calculations show that
\begin{equation}
W N_{k,+} =-N_{k,-} W
\quad\mbox{and}\quad
U_1R_{k,1}=-R_{k,1}U_1
\quad\mbox{for every $k\in K$}.
\end{equation}
Put $\M_1=\H_+\oplus \H_-$ and let $V:\M_1\to \H_-\oplus \H_-$ be the unitary operator given by
$$
\begin{array}{rcl}
V: \H_+\oplus \H_-&\to& \H_-\oplus \H_-\\
(x,y) &\mapsto & (Wx,y).
\end{array}
$$
Then, it is easy to see that $V{N_{k}}_{\vert \M_1}V^{-1}=(-N_{k,-})\oplus N_{k,-}$ for every $k\in K$. Since the normal operators $\{ N_{k,-} \}_{k\in K}$ are mutually commuting, as in \eqref{CommonConj}, there exists a conjugation $J$ on $\H_-$ such that $JN_{k,-}J=\left(N_{k,-}\right)^*$ for every $k\in K$. Put
$
J_1=\begin{bmatrix}
0 & J\\
J & 0\\
\end{bmatrix}
\begin{array}{l}
\H_-\\
\H_-
\end{array}.
$
It is easy to see that $J_1$ is a conjugation on $\H_-\oplus \H_-$ satisfying
$$
J_1\left( (-N_{k,-})\oplus N_{k,-} \right)J_1=-\left((-N_{k,-})\oplus N_{k,-}\right)^*
\quad\mbox{for every $k\in K$}.
$$
Hence, $C_1:=V^{-1}J_1V$ is a conjugation on $\M_1$ that satisfies $C_1{N_k}_{\vert \M_1}C_1=-\left({N_k}_{\vert \M_1}\right)^*$ for every $k\in K$.

Since $\{R_{k,1}\}_{k\in K}$ are mutually commuting and $U_1R_{k,1}U_1^*=-R_{k,1}$ for every $k\in K$, by applying the preceding argument to $R_{k_2,1}\in\mathcal{B}(\l_1)$, we obtain subspaces $\M_2$ and $\l_2$ of $\l_1$, with $\l_1=\M_2\oplus \l_2$, a conjugation $C_2$ on $\M_2$ such that
$$
C_2{N_k}_{\vert \M_2}C_2=C_2{R_{k,1}}_{\vert \M_2}C_2=-\left( {R_{k,1}}_{\vert \M_2} \right)^*=-\left({N_k}_{\vert \M_2}\right)^*\quad\mbox{for every $k\in K$},
$$
a unitary operator $U_2$ on $\l_2$, and a set of mutually commuting normal operators $\{R_{k,2}\}_{k\in K}$ on $\l_2$ such that ${N_k}_{\vert \l_2}=R_{k,2}$ and $U_2R_{k,2}U_2^*=-\left(R_{k,2}\right)^*$. This process produces a sequence $\M_i$, $i\geq 1$, of reducing subspaces of $N_k$, for $k\in K$, and conjugations $C_i$ on $\M_i$ such that ${N_k}_{\vert \M_i}$ is $C_i$-skew symmetric for all $i\in\Zp$ and $k\in K$.

For each $i\geq 1$, it is easy to see that
$
\left(\sum_{j=1}^i \M_j\right)^{\perp}=\l_i=\bigcap_{j=1}^i\ker(N_{k_j}).
$
Hence,
$$
\left( \bigoplus_{i=1}^{\infty} \M_i \right)^{\perp}
=\bigcap_{i=1}^{\infty}\left(\sum_{j=1}^i \M_j\right)^{\perp}
=\bigcap_{i=1}^{\infty}\ker(N_{k_i})=\{0\}.
$$
Consequently, $\H=\bigoplus_{i=1}^{\infty} \M_i$; moreover, one can check that $C:=\bigoplus_{i=1}^{\infty} C_i$ is a conjugation on $\H$ satisfying $CN_kC=-N_k^*$ for every $k\in K$.
\end{proof}

\medskip

Note that every bounded linear operator $T$ acting on a Hilbert space can be written as $T=N\oplus A$ where $N$ is normal and $A$ is an operator that has no reducing subspace for which the restriction of $A$ is normal (of course, either $N$ or $A$ could be absent). In fact,
$N$ is the restriction of $T$ to the reducing subspace
\begin{equation}\label{normalpart}
\H_{\rm nor}=\bigcap_{1\leq n,m<\infty}\ker({T^*}^nT^m-T^m{T^*}^n).
\end{equation}

The operator $N$ is called the {\it normal part} of $T$, while $A$ is called the {\it abnormal part} of $T$. In this case, we denote $T_{\rm nor}=N$ and $T_{\rm abnor}=A$.

Using \eqref{normalpart}, one can easily see that if $M$ is a normal operator, then
\begin{equation}\label{nor.abnor}
(M\oplus T)_{\rm nor}=M\oplus T_{\rm nor}
\quad\mbox{and}\quad
(M\oplus T)_{\rm abnor}=T_{\rm abnor}.
\end{equation}

The following lemma is an immediate consequence of \cite[Proposition 3.10]{Zhu14} and \eqref{nor.abnor}.

\begin{lemma}\label{Tnor.Tabnor}
Let $M\in\lh$ be normal and $T\in\lk$ such that $M\oplus T$ is skew-symmetric. Then, $M\oplus T_{\rm nor}$ and $T_{\rm abnor}$ are skew-symmetric.
\end{lemma}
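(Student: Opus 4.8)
The plan is to reduce the statement to the cited result \cite[Proposition 3.10]{Zhu14} by rewriting the normal and abnormal parts of the direct sum $M\oplus T$ using the identities \eqref{nor.abnor}. Since $M\oplus T$ is skew-symmetric by hypothesis, a clean way to exploit \cite[Proposition 3.10]{Zhu14} is to observe that this proposition (for skew-symmetric operators) should assert that the normal part and the abnormal part of a skew-symmetric operator are each themselves skew-symmetric. The whole task is then to identify these two parts correctly.

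First I would apply \eqref{nor.abnor} with the roles assigned so that the normal summand is $M$ and the remaining summand is $T$. This gives immediately
$$
(M\oplus T)_{\rm nor}=M\oplus T_{\rm nor}
\quad\mbox{and}\quad
(M\oplus T)_{\rm abnor}=T_{\rm abnor}.
$$
Next, since $M\oplus T$ is skew-symmetric, I would invoke \cite[Proposition 3.10]{Zhu14}, which guarantees that both $(M\oplus T)_{\rm nor}$ and $(M\oplus T)_{\rm abnor}$ are skew-symmetric. Substituting the two displayed identities, this says precisely that $M\oplus T_{\rm nor}$ and $T_{\rm abnor}$ are skew-symmetric, which is the claim. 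Thus the argument is essentially a one-line citation together with the bookkeeping provided by \eqref{nor.abnor}.

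The only subtlety, and the step I would be most careful about, is verifying that the decomposition into normal and abnormal parts interacts well with the hypothesis that $M$ is already normal: one must be sure that the normal part of $M\oplus T$ is exactly $M\oplus T_{\rm nor}$ rather than something larger (for instance, that no extra reducing subspace of $T$ on which $T$ is normal gets absorbed into, or separated from, $M$). This is exactly what \eqref{nor.abnor} records, derived from the explicit description \eqref{normalpart} of the normal part as the intersection of kernels of the commutators ${T^*}^nT^m-T^m{T^*}^n$; because these commutators split across the orthogonal direct sum $M\oplus T$ and vanish identically on the $M$ block, the normal part of $M\oplus T$ is forced to contain all of $M$ and to meet the $T$ block in precisely $T_{\rm nor}$. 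With that identity in hand there is no real obstacle, and the lemma follows immediately.
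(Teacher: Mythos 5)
Your proposal is correct and follows exactly the route the paper takes: the paper states that the lemma ``is an immediate consequence of \cite[Proposition 3.10]{Zhu14} and \eqref{nor.abnor}'', i.e.\ one identifies $(M\oplus T)_{\rm nor}=M\oplus T_{\rm nor}$ and $(M\oplus T)_{\rm abnor}=T_{\rm abnor}$ via \eqref{nor.abnor} and then applies Zhu's result on the normal and abnormal parts of a skew-symmetric operator. Your additional remark justifying \eqref{nor.abnor} from \eqref{normalpart} is sound and merely makes explicit what the paper leaves implicit.
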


\begin{lemma}\label{skew.D.-D}
Let $N\in\lh$ be a skew-symmetric normal operator and $\Delta\subseteq\C$ a Borel subset such $\Delta=-\Delta$. Then, the restriction of $N$ to the reducing subspace $\ran(E_N(\Delta))$ is skew-symmetric.
\end{lemma}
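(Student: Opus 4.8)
The plan is to exploit the conjugation furnished by the skew-symmetry of $N$ together with the spectral-projection intertwining relation from Lemma \ref{TN=M*T}(ii), and then simply restrict everything to the reducing subspace $\ran(E_N(\Delta))$. Concretely, I would start from a conjugation $C$ on $H$ with $CNC=-N^*$ (which exists by hypothesis), rewrite it using $C^2=I$ as the conjugate-linear intertwining $CN=-N^*C$, and feed this into Lemma \ref{TN=M*T}(ii). Taking $M=N$ there, the conclusion is that $CE_N(D)=E_N(-D)C$ for every Borel subset $D\subseteq\C$.

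The next step is to specialize $D=\Delta$ and invoke the symmetry hypothesis $\Delta=-\Delta$, which collapses the sign flip and gives plain commutation $CE_N(\Delta)=E_N(\Delta)C$. Writing $M:=\ran(E_N(\Delta))$, this commutation is exactly what is needed to show that $C$ leaves $M$ invariant: if $h\in M$, i.e. $E_N(\Delta)h=h$, then $E_N(\Delta)(Ch)=CE_N(\Delta)h=Ch$, so $Ch\in M$. Since $C$ is an involutive antilinear isometry and $M$ is $C$-invariant, the restriction $C_\Delta:=C|_M$ is again an involutive antilinear isometry, hence a conjugation on $M$.

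Finally, because $E_N(\Delta)$ is a spectral projection, $M$ reduces $N$, so $N|_M$ is a well-defined operator on $M$ with $(N|_M)^*=(N^*)|_M$. For $h\in M$ all of $Ch$, $NCh$, $CNCh$ remain in $M$ (using that $M$ reduces $N$ and is $C$-invariant), and $CNCh=-N^*h$; reading this inside $M$ gives $C_\Delta(N|_M)C_\Delta=-(N|_M)^*$, which is precisely the statement that $N|_M$ is skew-symmetric.

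I do not expect a genuine obstacle here; the argument is essentially a restriction argument. The only points demanding care are the correct invocation of Lemma \ref{TN=M*T}(ii) for the \emph{conjugate-linear} operator $C$ (so that the conclusion carries the $E_N(-\Delta)$ flip rather than $E_N(\Delta)$), and the crucial use of the hypothesis $\Delta=-\Delta$ to convert that flip into ordinary commutation, without which $C$ would not preserve $M$.
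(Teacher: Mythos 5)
Your proposal is correct and follows essentially the same route as the paper's proof: both obtain $CE_N(\Delta)=E_N(-\Delta)C=E_N(\Delta)C$ from Lemma \ref{TN=M*T} applied to the conjugation $C$ with $CNC=-N^*$, and then restrict $C$ and $N$ to $\ran(E_N(\Delta))$. No issues.
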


\begin{proof}
Let $C$ be a conjugation on $\H$ such that $CNC=-N^*$. By Lemma \ref{TN=M*T}, we have $CE_N(\Delta)=E_N(-\Delta)C=E_N(\Delta)C$; hence, $C\ran(E_N(\Delta))=\ran(E_N(\Delta))$. Let $C_0$ and $N_0$ denote the restrictions of $C$ and $N$ to $\ran(E_N(\Delta))$, respectively. Then, $C_0$ is a conjugation on $E_N(\Delta)$. Moreover, it follows immediately from the equality $CNC=-N^*$ that $C_0N_{0}C_0=-N_{0}^*$.
\end{proof}

\medskip

In \cite[Lemma 3.2]{GUO.JI.ZHU} and \cite[Corollary 3.4]{Amara.Oudghiri2}, the authors proved that if $N$ is normal, then a direct sum $N\oplus T$ is complex symmetric (resp. conjugate-normal) if and only if $T$ is complex-symmetric  (resp. conjugate-normal). However, this property does not hold for skew-symmetry even if $N$ is skew-symmetric. The next result provides a complete description of normal operators $N$ that satisfy this property.

\begin{theorem}\label{directsumskew}
Let $N\in\lh$ be normal. The followings statements are equivalent:
\begin{enumerate}[\rm (i)]
\item For every separable complex Hilbert space $\K$ and every $T\in\mathcal{B}(\K)$,
$$
N\oplus T\mbox{ is skew-symmetric}\quad\Longleftrightarrow\quad  T\mbox{ is skew-symmetric.}
$$ 
\item $N$ is skew-symmetric and $m(N_{\vert\ker(N)^{\perp}})<\infty$.
\end{enumerate}
\end{theorem}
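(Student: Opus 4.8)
The plan is to prove the two implications separately, leaning on the multiplicity decomposition \eqref{multip.decom} and the skew-symmetry criterion of Li and Zhu \eqref{mu(-delta)} together with Lemma \ref{Tnor.Tabnor} and Lemma \ref{skew.D.-D}. For the direction (ii)$\implies$(i), assume $N$ is skew-symmetric with $m(N_{\vert\ker(N)^\perp})<\infty$ and let $T\in\mathcal B(K)$ be arbitrary. The nontrivial content is the forward arrow: if $N\oplus T$ is skew-symmetric then $T$ is skew-symmetric (the reverse is immediate, since a direct sum of two $C$-skew symmetric operators is $C_1\oplus C_2$-skew symmetric). First I would split off the kernel: write $N=N_{\vert\ker(N)^\perp}\oplus 0_{\ker N}$, absorb the zero block into the $T$-side harmlessly, and reduce to the case where $N$ is injective (since $0\oplus T$ is skew-symmetric iff $T$ is, as $0$ is trivially skew-symmetric with any conjugation). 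Then $m(N)<\infty$ means the multiplicity-$\infty$ part $M_{\mu_\infty}^{(\infty)}$ is absent, so $N\cong\bigoplus_{1\le i<\infty}M_{\mu_i}^{(i)}$ with finite multiplicities and, by \eqref{mu(-delta)}, each $\mu_i$ can be taken symmetric under $z\mapsto -z$.

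The key step will be to peel $N$ off the normal part of $N\oplus T$ using Lemma \ref{Tnor.Tabnor}. Applying that lemma to $M=N$ and the operator $T$, skew-symmetry of $N\oplus T$ forces both $N\oplus T_{\rm nor}$ and $T_{\rm abnor}$ to be skew-symmetric. Since $T=T_{\rm nor}\oplus T_{\rm abnor}$ and $T_{\rm abnor}$ is already skew-symmetric, it suffices to show $T_{\rm nor}$ is skew-symmetric. Thus the problem reduces entirely to the normal case: I must show that if $N$ and $N\oplus M$ are both skew-symmetric normal operators with $m(N)<\infty$ (and $N$ injective), then $M$ is skew-symmetric. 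Here I would pass to the multiplicity data: write the $\mu$-measures of $N$ and the $\nu$-measures of $M$, so that $N\oplus M$ has multiplicity measures governed by overlaying the two families. The skew-symmetry of $N\oplus M$ gives, via \eqref{mu(-delta)}, a \emph{symmetric} choice of representing measures for $N\oplus M$; the finiteness $m(N)<\infty$ is exactly what lets me ``subtract'' the contribution of $N$ measure-by-measure at each finite multiplicity level and conclude that the leftover data for $M$ can itself be arranged symmetrically, hence $M$ is skew-symmetric by \eqref{mu(-delta)} again.

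For the direction (i)$\implies$(ii), I would argue the contrapositive against each failure mode. If $N$ is not skew-symmetric, then taking $T=0$ (or any skew-symmetric $T$, e.g. $T$ absent) shows $N\oplus T=N$ is not skew-symmetric while $T$ is, violating (i); so (i) forces $N$ skew-symmetric. The substantive half is showing $m(N_{\vert\ker(N)^\perp})<\infty$ is necessary, and this is where Example \ref{counterexample} is the prototype: if $m(N_{\vert\ker(N)^\perp})=\infty$, the infinite-multiplicity part supplies enough room to build a skew-symmetric $N\oplus T$ with $T$ \emph{not} skew-symmetric. Concretely, the $M_{\mu_\infty}^{(\infty)}$ summand, having symmetric support and infinite multiplicity, can be paired off against a non-skew-symmetric normal $T$ whose multiplicity measure is absorbed into the symmetric $\infty$-multiplicity data of the sum; the unitary $U$ implementing $U(N\oplus T)U^*=-(N\oplus T)$ exists by the multiplicity criterion \eqref{uniq.dec}--\eqref{mu(-delta)} even though no such $U$ exists for $T$ alone. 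I would construct such a $T$ explicitly by choosing its multiplicity measures to break the $z\mapsto -z$ symmetry at some finite level while relying on the $\infty$-level of $N$ to repair it in the sum.

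The main obstacle will be the careful bookkeeping in the normal reduction of (ii)$\implies$(i): matching up the multiplicity functions of $N$, $M$, and $N\oplus M$ under the equivalence \eqref{uniq.dec} and verifying that a symmetric representing family for the sum, combined with the finiteness of $N$'s multiplicities, can be disentangled into a symmetric family for $M$. The delicate point is that multiplicities add in a max/sum fashion across the singular pieces rather than cleanly, so I expect to localize to mutually singular carriers and treat each multiplicity level $i<\infty$ separately, using that the support symmetry $\Delta\mapsto-\Delta$ commutes with these decompositions; the finiteness hypothesis $m(N)<\infty$ is precisely the cancellation property that makes this subtraction well-defined.
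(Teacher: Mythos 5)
Your outline follows essentially the same route as the paper: both directions hinge on the same ideas, namely taking $T=0$ and then absorbing a non-symmetric piece $L$ of $M_{\mu_\infty}$ back into $M_{\mu_\infty}^{(\infty)}$ (so that $N\oplus L\cong N$) for the necessity of $m(N_{\vert\ker(N)^{\perp}})<\infty$, and, for sufficiency, reducing to normal $T$ via Lemma \ref{Tnor.Tabnor}, localizing to the mutually singular symmetric carriers of the $\mu_i$ via Lemma \ref{skew.D.-D}, and cancelling the finite-multiplicity part of $N$. The one step you leave as a plan --- the ``subtraction'' of $N$'s multiplicity data --- is exactly where the paper invokes the cancellation property of star-cyclic normal operators (\cite[p.~295, Proposition 10.6]{Conway.Func.An}, applied $i$ times to $M_{\mu_i}$) to pass from $N_i\oplus T_i\cong N_i\oplus(-T_i)$ to $T_i\cong -T_i$; your multiplicity-function bookkeeping is a correct but unexecuted substitute for that citation.
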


\begin{proof}
(i)$\implies$(ii). Suppose statement (i) holds. Taking $T=0$, the zero operator on any Hilbert space $\K$, we obtain that $N\oplus 0$ is skew-symmetric. By  \cite[Lemma 2.1]{Li.Zhu}, it follows that $N_{\vert(\ker N)^{\perp}}$ is skew-symmetric. Therefore, $N=N_{\vert(\ker N)^{\perp}}\oplus 0$ is skew-symmetric.

For convenience, put $N_1=N_{\vert\ker(N)^{\perp}}$, and let $N_1\cong M_{\mu_{\infty}}^{(\infty)}\oplus\bigoplus_{1\leq i <\infty} M_{\mu_i}^{(i)}$ be the canonical decomposition in \eqref{multip.decom} of $N_1$.

We need to show that $\mu_{\infty}=0$. Assume, for the sake of contradiction, that $\mu_{\infty}\neq 0$. Hence, $M_{\mu_{\infty}}$ is not absent and is not zero due to the injectivity of $N_1$. Since $\sigma(M_{\mu_{\infty}})\neq\{0\}$, using the spectral theorem, we can write $M_{\mu_{\infty}}=M\oplus L$, were $L$ is a normal operator with $\sigma(L)\neq -\sigma(L)$; in particular, $L\ncong -L$ and thus not skew-symmetric by Lemma \ref{cong.skewsym}.

Let us verify that $N\oplus L$ is skew symmetric. We have
\begin{align*}
N_1\oplus L \cong M_{\mu_{\infty}}^{(\infty)}\oplus\bigoplus_{1\leq i <\infty} M_{\mu_i}^{(i)}\oplus L 
&\cong (M\oplus L)^{(\infty)}\oplus L\oplus\bigoplus_{1\leq i <\infty} M_{\mu_i}^{(i)}\\
&\cong M^{(\infty)}\oplus L^{(\infty)}\oplus L \oplus \bigoplus_{1\leq i <\infty} M_{\mu_i}^{(i)}\\
&\cong M^{(\infty)}\oplus L^{(\infty)}\oplus  \bigoplus_{1\leq i <\infty} M_{\mu_i}^{(i)}\\
&\cong N_1.
\end{align*}
Thus, $N\oplus L\cong N$; in particular, $N\oplus L$ is skew-symmetric; a contradiction. Therefore, $m(N_1)<\infty$.

\medskip

(ii)$\implies$(i). Since $N$ is skew-symmetric, the reverse implication in (i) is always true. Fix a bounded linear operator $T$, assume that $N\oplus T$ is skew-symmetric, and let us show that $T$ is skew-symmetric.

In view of Lemma \ref{Tnor.Tabnor} and \cite[Lemma 2.1]{Li.Zhu}, we can assume without loss of generality that $T$ is normal and $N=N_{\vert \ker(N)^{\perp}}$. Moreover, by  \eqref{multip.decom} and \eqref{mu(-delta)}, we may directly assume that
$$
N=\bigoplus_{1\leq i <\infty} M_{\mu_i}^{(i)}
\quad\mbox{and}\quad
\H=\bigoplus_{1\leq i<\infty}L^2(\mu_i)^{(i)}
$$
where the $\mu_i$'s are mutually singular and satisfy $\mu_i(\Delta)=\mu_i(-\Delta)$ for every Borel subset $\Delta\subseteq\C$. Since the measures $\mu_i$'s are mutually singular and satisfy \eqref{mu(-delta)}, it is elementary to see that there exist mutually disjoint Borel subsets $\{ \Delta_i\}_{i\in\Zp}$ of $\C$ such that for all $i,j\in \Zp$ with $i\neq j$,
\begin{equation}\label{singularsubsets}
\Delta_i=-\Delta_i,\quad \mu_i(\Delta)=\mu_i(\Delta\cap\Delta_i),\quad \mu_i(\Delta_j)=0\;\;\mbox{ for every Borel }\Delta\subseteq\C. 
\end{equation}
Let $\Delta_0=\C\setminus\cup_{i\in \Zp} \Delta_i$; clearly, $\Delta_0=-\Delta_0$. For convenience, put $N_i=M_{\mu_i}^{(i)}$, $\H_i=L^2(\mu_i)^{(i)}$ for $i\in \Zp$, and let $T=T_0\oplus\bigoplus_{i\in \Zp} T_i$ be the decomposition of $T$ with respect to the orthogonal decomposition $\K=\ran(E_T(\Delta_0))\oplus\bigoplus_{i\in \Zp}\ran(E_T(\Delta_i))$. For $0\leq i <\infty$, put $\K_i=\ran(E_T(\Delta_i))$.

Since by  \eqref{singularsubsets}, $\mu_i(\Delta_0)=\mu_i(\Delta_j)=0$ all $i,j\in \Zp$ with $i\neq j$, it follows that $E_{N_i}(\Delta_0)=E_{N_i}(\Delta_j)=0$. Therefore,
\begin{equation*}
\ran(E_{N\oplus T}(\Delta_0))=\K_0
\quad\mbox{and}\quad
\ran(E_{N\oplus T}(\Delta_i))=\H_i\oplus \K_i \;\;\mbox{for every }i\in \Zp.
\end{equation*}
It follows that
$$
(N\oplus T)_{\vert \ran(E_{N\oplus T}(\Delta_0))}=T_0
\quad\mbox{and}\quad
(N\oplus T)_{\vert\ran(E_{N\oplus T}(\Delta_i))}=N_i\oplus T_i\;\;\mbox{for every }i\in \Zp.
$$
Consequently, by Lemma \ref{skew.D.-D}, $T_0$ and $N_i\oplus T_i$, for $i\in \Zp$, are skew-symmetric operators. In particular, by Lemma \ref{cong.skewsym}, we have
\begin{equation}\label{eq2}
N_i\oplus T_i\cong -N_i\oplus -T_i \quad\mbox{for every }i\in \Zp.
\end{equation}

Fix $i\in \Zp$, and let us prove that $T_i$ is skew-symmetric. Given $\mu_i(\Delta)=\mu_i(-\Delta)$ for every Borel subset $\Delta\subseteq\C$,  \cite[Theorem 1.11]{Li.Zhu} implies that $N_i$ is skew-symmetric; hence, $N_i\cong-N_i$ by Lemma \ref{cong.skewsym}. Combining this with \eqref{eq2}, we obtain $N_i\oplus T_i\cong N_i\oplus -T_i$. Since $M_{\mu_i}$ is a star-cyclic normal operator (\cite[p. 269, Theorem 3.4]{Conway.Func.An}), applying \cite[p. 295, Proposition 10.6]{Conway.Func.An} $i$ times to $M_{\mu_i}$, we conclude that $T_i\cong -T_i$, and so $T_i$ is skew-symmetric by Lemma \ref{cong.skewsym}. 

Thus, $T=T_0\oplus\bigoplus_{i\in\Zp} T_i$ is skew-symmetric as a direct sum of skew-symmetric operators.
\end{proof}

\begin{lemma}\label{twostarcyclics}
Let $N\in\lh$ be normal, and suppose that there exist reducing subspaces $\{\H_i\}_{1\leq i \leq r}$, $r<\infty$, of $N$ such that $\H=\vee(\sum_{i=1}^r \H_i)$ and $N_{\vert \H_i}$ is star-cyclic for each $i\in\{1,2,\dots,r\}$. Then, for every reducing subspace $L$ of $N$, we have  $m(N_{\vert L})<\infty$.
\end{lemma}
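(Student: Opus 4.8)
The plan is to reduce the statement to the single case $L=H$ and then to bound the multiplicity of $N$ by $r$. First I would record that, since $m(N)<\infty$ means precisely that the infinite-multiplicity summand $M_{\mu_\infty}^{(\infty)}$ is absent from the canonical decomposition \eqref{multip.decom}, the general case follows from the case $L=H$ by additivity of multiplicity: writing $N=N_{\vert L}\oplus N_{\vert L^\perp}$ for a reducing subspace $L$, the multiplicity function of $N$ is the (a.e.) sum of those of $N_{\vert L}$ and $N_{\vert L^\perp}$, so $m(N_{\vert L})\le m(N)$. Hence it suffices to show $m(N)<\infty$, and in fact I will obtain the sharper bound $m(N)\le r$.

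To prove $m(N)\le r$, I would orthogonalize the spanning family $\{H_i\}$ into a finite orthogonal sum of cyclic reducing subspaces. Set $F_0=\{0\}$ and $F_i=\bigvee\left(\sum_{j\le i}H_j\right)$ for $1\le i\le r$; each $F_i$ is a reducing subspace of $N$ (the closed linear span of reducing subspaces is reducing), $F_r=H$, and putting $G_i=F_i\ominus F_{i-1}$ yields an orthogonal decomposition $H=\bigoplus_{i=1}^r G_i$ into reducing subspaces. Let $Q_i$ be the orthogonal projection onto $G_i$; since $G_i$ reduces $N$, $Q_i$ commutes with $N$ and $N^*$, hence with every $p(N,N^*)$ and so with $W^*(N)$. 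The crucial point is that each $G_i$ is singly generated: if $e_i$ is a star-cyclic vector for $N_{\vert H_i}$, so that $H_i=\overline{W^*(N)e_i}$, then from $F_i=\bigvee(F_{i-1}+H_i)$ together with $Q_i F_{i-1}=0$ one gets $G_i=\overline{Q_iH_i}=\overline{Q_iW^*(N)e_i}=\overline{W^*(N)(Q_ie_i)}$, the last equality using that $Q_i$ commutes with $W^*(N)$. Thus $g_i:=Q_ie_i$ is a star-cyclic vector for $N_{\vert G_i}$, so $N_{\vert G_i}$ is a star-cyclic normal operator and therefore has multiplicity at most one.

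Finally, since $H=\bigoplus_{i=1}^r G_i$ is a finite orthogonal sum and multiplicity is additive over orthogonal direct sums, the multiplicity function of $N$ is at most $\sum_{i=1}^r 1=r$ almost everywhere with respect to a scalar spectral measure of $N$; equivalently, the measure $\mu_\infty$ in \eqref{multip.decom} vanishes, so $m(N)\le r<\infty$. Combined with the first paragraph, this gives $m(N_{\vert L})\le r<\infty$ for every reducing subspace $L$. I expect the orthogonalization to be the main obstacle: one must check carefully that $G_i$ is genuinely cyclic, which rests on the commutation of the projection $Q_i$ with $W^*(N)$ and on the identity $G_i=\overline{Q_iH_i}$ obtained from $F_i=\bigvee(F_{i-1}+H_i)$. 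The remaining ingredients (a star-cyclic normal operator has multiplicity one, and multiplicity is additive over orthogonal direct sums and hence monotone under reducing restriction) are standard consequences of the spectral multiplicity theory recalled just before \eqref{multip.decom}.
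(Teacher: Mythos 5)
Your proposal is correct, and its core — splitting $H$ into at most $r$ mutually orthogonal cyclic reducing subspaces via $G_i=F_i\ominus F_{i-1}$, with $G_i=\overline{W^*(N)Q_ie_i}$ because the projections $Q_i$ commute with $W^*(N)$ — is exactly the decomposition the paper produces, only the paper builds it by induction on $r$, peeling off one summand $L_{N,(I-P)h}$ at a time rather than orthogonalizing all at once. Where you genuinely diverge is the final step. You pass from ``$N$ is a finite direct sum of star-cyclic normal operators'' to the conclusion by invoking the multiplicity function and its additivity over orthogonal sums (which also gives you the reduction $m(N_{\vert L})\le m(N)$ up front). The paper never introduces the multiplicity function: it argues by contradiction that $m(N_{\vert L})\nless\infty$ would give $N\cong N_0\oplus M_{\mu}^{(\infty)}$, hence $N\oplus M_{\mu}^{(2)}\cong N\oplus M_{\mu}$, and then cancels the $r$ star-cyclic summands one at a time using the cancellation property for star-cyclic normal operators (\cite[p.~295, Proposition 10.6]{Conway.Func.An}) to reach $M_{\mu}^{(2)}\cong M_{\mu}$, contradicting the uniqueness in \eqref{uniq.dec}. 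Your route is shorter and gives the sharper quantitative bound $m(N)\le r$, but it presupposes the existence and additivity of the multiplicity function, machinery the paper deliberately avoids by relying only on the decomposition \eqref{multip.decom}, its uniqueness, and the cited cancellation result; both arguments are sound.
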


\begin{proof}
First, let us show that $N$ is a finite direct sum of star-cyclic normal operators.
We will proceed by induction on $r$. The result is clear when $r=1$. Assume the lemma holds for some integer $r\geq 1$. Let $\H_1,\H_2,\dots \H_{r+1}$ be $r+1$ reducing subspaces of $N$ such that $\H=\vee(\sum_{i=1}^{r+1} \H_i)$ and $N_{\vert \H_i}$ is star cyclic for $i\in\{1,2,\dots,r+1\}$. Let $\K=\vee(\sum_{i=1}^r \H_i)$ and $P\in\lh$ be the orthogonal projection onto $\K$. Then, $\K$ reduces $N$; moreover, by the induction hypothesis, $N_{\vert \K}$ is a finite direct sum of star-cyclic operators.

By star-cyclicity of $N_{\vert \H_{r+1}}$, there exists $h\in \H$ such that $\H_{r+1}=L_{\{N\},\{h\}}$. To complete the proof, it is clear that it suffices to show that $\H=\K\oplus L_{\{N\},\{(I-P)h\}}$. Note that $\K$ and $L_{\{N\},\{(I-P)h\}}$ are orthogonal. In fact, since $\K$ reduces $N$, its orthogonal projection $P$, and hence $(I-P)$, commutes with $N$ and $N^*$. Consequently, 
\begin{align*}
L_{\{N\},\{(I-P)h\}}&=\bigvee\big\{ p(N,N^*)(I-P)h : p\in\P{2}  \big\}\\
&=(I-P)\bigvee\big\{ p(N,N^*)h : p\in\P{2}  \big\}\subseteq \K^{\perp}.
\end{align*}
It remains to prove that $\H=\K + L_{\{N\},\{(I-P)h\}}$; we need only to show that $\H_{r+1}\subseteq \K+L_{\{N\},\{(I-P)h\}}$. For $p\in\P{2}$, we have
$$
p(N,N^*)h=p(N,N^*)Ph+p(N,N^*)(I-P)h.
$$
Since $Ph\in \K$ and $\K$ reduces $p(N,N^*)$, we have $p(N,N^*)Ph\in \K$. Hence, $p(N,N^*)h\in \K + L_{\{N\},\{(I-P)h\}}$, and thus $\H_{r+1}\subseteq \K+L_{\{N\},\{(I-P)h\}}$.

\medskip

Now, let $L$ be a reducing subspace of $N$ and assume, for contradiction, that $m(N_{\vert L})\nless\infty$. According to \eqref{multip.decom}, $N_{\vert L}$ has a reducing subspace for which its restriction is unitarily equivalent to $M_{\mu}^{(\infty)}$ for some finite non-zero compactly supported measure $\mu$ on $\C$. Thus, we can write $N\cong N_0\oplus M_{\mu}^{(\infty)}$ for some normal operator $N_0$. This implies
$$
N\oplus M_{\mu}^{(2)}\cong N_0 \oplus M_{\mu}^{(\infty)}\oplus M_{\mu}^{(2)}\cong N_0 \oplus M_{\mu}^{(\infty)}\oplus M_{\mu}\cong N\oplus M_{\mu}.
$$
Since $N$ is a direct sum of $n$ star-cyclic normal operators $M_1, M_2,\dots, M_n$ with $n<\infty$, applying \cite[p. 295, Proposition 10.6]{Conway.Func.An} to each $M_i$, we conclude $M_{\mu}\cong M_{\mu}^{(2)}$; which contradicts \eqref{uniq.dec}.
\end{proof}

We are now ready to prove Theorems \ref{interpo.Cskew1} and \ref{interpo.Cskew2}.

\begin{proof}[Proof of Theorem \ref{interpo.Cskew1}]
The implication (i)$\implies$(ii) is clear.

\medskip

(ii)$\implies$(iii). First, let us show that $V_1$ exists. Since $VN_kV^{-1}=-N_k^*$, one can use \eqref{conjugate.relation} to show that $VN_k^*V^{-1}=-N_k$ for every $k\in K$. Consequently, for any $n\in\Zp$, $p\in\P{2n}$ and $\{k_l\}_{l=1}^n\subseteq K$, one can easily see that
$$
Vp(N_{k_1},N_{k_1}^*,\dots,N_{k_n},N_{k_n}^*)=p(-N_{k_1},-N_{k_1}^*,\dots,-N_{k_n},-N_{k_n}^*)^*V.
$$
Therefore, for every $i\in I$,
\begin{align*}
Vp(N_{k_1},N_{k_1}^*,\dots,N_{k_n},N_{k_n}^*)x_i
&=p(-N_{k_1},-N_{k_1}^*,\dots,-N_{k_n},-N_{k_n}^*)^*Vx_i\\
&=p(-N_{k_1},-N_{k_1}^*,\dots,-N_{k_n},-N_{k_n}^*)^*y_i
\end{align*}
and
\begin{align*}
Vp(N_{k_1},N_{k_1}^*,\dots,N_{k_n},N_{k_n}^*)y_i
&=p(-N_{k_1},-N_{k_1}^*,\dots,-N_{k_n},-N_{k_n}^*)^*Vy_i\\
&=p(-N_{k_1},-N_{k_1}^*,\dots,-N_{k_n},-N_{k_n}^*)^*x_i.
\end{align*}
Put $\l_0=L_{\mathcal{N},X\cup Y}$. Then, the above equalities imply that $V\l_0=\l_0$. Since $V$ is anti-unitary, we get $V\l_1=V\l_0^{\perp}=\l_0^{\perp}=\l_1$.

Let $V_1$ denote the restriction of $V$ to $\l_1$. Then, $V_1$ is anti-unitary on $\l_1$, and the relations $VN_kV^{-1}=-N_k^*$, for every $k\in K$, imply $V_1N_{k\vert \l_1}V_1^{-1}=-\left(N_{k\vert \l_1}\right)^*$. This proves the first part.

Now, by Lemma \ref{TN=M*T}, for all Borel subsets $\Delta\subseteq\C$ and $k\in K$, we have $VE_{N_k}(-\Delta)=E_{N_k}(\Delta)V$. The remainder of the proof is now similar to \eqref{P.xi.xj} and \eqref{P.xi.yj}. 

\medskip

(iii)$\implies$(i). For each $k\in K$, put $M_k=-N_k$. Let $K_0,K_1\subseteq K$ be finite subsets, and let $\{\Delta_k\}_{k\in K_0}$ and $\{D_k\}_{k\in K_1}$ be Borel subsets of $\C$. Then, for all $i,j\in I$, we have
\begin{align*}
\left\langle \left(\prod_{k\in K_0} E_{N_k}(\Delta_k)\right) x_i, \left(\prod_{l\in K_1} E_{M_l}(D_l)\right)x_j \right\rangle
=\left\langle \left(\prod_{k\in K_0} E_{N_k}(\Delta_k)\right) x_i, \left(\prod_{l\in K_1} E_{-N_l}(D_l)\right)x_j \right\rangle\\
=\left\langle \left(\prod_{k\in K_0} E_{N_k}(\Delta_k)\right) x_i, \left(\prod_{l\in K_1} E_{N_l}(-D_l)\right)x_j\right\rangle\\
=\left\langle \left(\prod_{k\in K_0} E_{N_k}(-\Delta_k)\right) x_j, \left(\prod_{l\in K_1} E_{N_l}(D_l)\right)x_i \right\rangle\\
=\left\langle \left(\prod_{k\in K_0} E_{M_k}(\Delta_k)\right) x_j, \left(\prod_{l\in K_1} E_{N_l}(D_l)\right)x_i \right\rangle.
\end{align*}
Similarly,
$$
\left\langle \left(\prod_{k\in K_0} E_{N_k}(\Delta_k)\right) x_i, \left(\prod_{l\in K_1} E_{M_l}(D_l)\right)y_j \right\rangle
=\left\langle \left(\prod_{k\in K_0} E_{M_k}(\Delta_k)\right) x_j, \left(\prod_{l\in K_1} E_{N_l}(D_l)\right)y_i \right\rangle.
$$
It follows by Theorem \ref{interpo.Nk.Mk} that there exists a conjugation $C_0$ on $\l_0:=L_{\mathcal{N}, X\cup Y}$ such that $C_0{N_k}_{\vert \l_0}C_0=-\left({N_k}_{\vert \l_0}\right)^*$ and $C_0x_i=y_i$ for all $k\in K$ and $i\in I$.

Now, since by hypothesis $V_1{N_k}_{\vert \l_1}V_1^{-1}=-\left({N_k}_{\vert \l_1}\right)^*$ for every $k\in K$, Lemma \ref{cong.skewsym} implies that there exists a conjugation $C_1$ on $\l_1$ so that $C_1{N_k}_{\vert \l_1}C_1=-\left({N_k}_{\vert \l_1}\right)^*$ for every $k\in K$. Therefore, $C=C_0\oplus C_1$ is a conjugation on $\H=\l_0\oplus \l_1$ that satisfies all required equalities.
\end{proof}

\medskip

\begin{proof}[Proof of Theorem \ref{interpo.Cskew2}]
The implication (i)$\implies$(ii) follows directly from Theorem \ref{interpo.Cskew1}.

\medskip

(ii)$\implies$(i). Let $\H_0=L_{\{N\},X\cup Y}$ and $\H_1=\H_0^{\perp}$, and denote by $N_0$ and $N_1$ the restrictions of $N$ to $\H_0$ and $\H_1$, respectively. In the light of Theorem \ref{interpo.Cskew2} and Lemma \ref{cong.skewsym}, we only need to show that $N_{1}$ is skew-symmetric. This follows immediately from Theorem \ref{directsumskew} and the facts that $m\left({N_0}_{\vert \ker(N_0)^{\perp}}\right)<\infty$ and $N=N_0\oplus N_{1}$ is skew-symmetric.

\medskip

Let us now prove the second part of the theorem. Assume that $I$ is finite. Since
$$
\H_0=\bigvee\left(\sum_{i\in I} L_{\{N\},\{x_i\}}+L_{\{N\},\{y_i\}}\right)=\bigvee\left(\sum_{i\in I} L_{\{N_0\},\{x_i\}}+L_{\{N_0\},\{y_i\}}\right)
$$
and 
the restrictions of $N_0$ to $L_{\{N_0\},\{x_i\}}$ and $L_{\{N_0\},\{y_i\}}$ are star-cyclic, Lemma \ref{twostarcyclics} implies that $m\left({N_0}_{\vert \ker(N_0)^{\perp}}\right)<\infty$. This concludes the proof.
\end{proof}

\end{document}